\def\Z{{\mathbb Z}}
\def\Q{{\mathbb Q}}
\def\C{{\mathbb C}}
\def\P{{\mathbb P}}
\def\L{{\mathbb L}}
\def\cC{{\mathcal C}}
\def\cG{{\mathcal G}}
\def\M{{\mathcal M}}
\def\cP{{\mathcal P}}
\def\U{{\mathcal U}}
\def\G{\Gamma}
\def\d{{\mathfrak d}}
\def\g{{\mathfrak g}}
\def\n{{\mathfrak n}}
\def\p{{\mathfrak p}}
\def\r{{\mathfrak r}}
\def\u{{\mathfrak u}}
\def\thetadual{\check{\theta}}
\def\Ql{{\Q_\ell}}
\def\Zl{{\Z_\ell}}
\def\Gm{{\mathbb{G}_m}}
\def\Sp{{\mathrm{Sp}}}
\def\GL{{\mathrm{GL}}}
\def\GSp{{\mathrm{GSp}}}
\def\prol{{(\ell)}}
\def\un{\mathrm{un}}
\def\geom{\mathrm{geom}}
\def\top{\mathrm{top}}
\def\orb{\mathrm{orb}}
\def\ab{\mathrm{ab}}
\def\et{\mathrm{\acute{e}t}}
\newcommand\im{\operatorname{im}}
\newcommand\id{\operatorname{id}}
\newcommand\Hom{\operatorname{Hom}}
\newcommand\Spec{\operatorname{Spec}}
\newcommand\Aut{\operatorname{Aut}}
\newcommand\Gr{\operatorname{Gr}}
\newcommand\Gal{\operatorname{Gal}}
\newtheorem{theorem}{Theorem}[section]
\newtheorem{proposition}[theorem]{Proposition}
\newtheorem{corollary}[theorem]{Corollary}
\newtheorem{bigtheorem}{Theorem}
\newtheorem{bigcorollary}[bigtheorem]{Corollary}
\theoremstyle{definition}
\theoremstyle{remark}
\newtheorem{remark}[theorem]{Remark}
\begin{document}
 	
\title{Remarks on Rational Points of Universal Curves}

\author{Tatsunari Watanabe}
\address{Mathematics Department, Embry-Riddle Aeronautical University, Prescott, AZ 86301}
\email{watanabt@erau.edu}

\maketitle
\begin{abstract}
In this notes, we will give some remarks on the results in \cite{hain2} by Hain. In particular, we consider the universal curves
$\M_{g,n+1}\to \M_{g,n}$ and the sections of their algebraic fundamental groups. 

\end{abstract}

\tableofcontents
\section{Introduction}
 Denote by $\M_{g,n/\Z}$ the moduli stack of proper smooth $n$-pointed curves of genus $g$. We always assume that $2g-2+n>0$ in this paper. We have the universal curve $\pi:\cC_{g,n/\Z}\to \M_{g,n/\Z}$. For a field $k$, we denote the base change to $k$ of $\pi$ by $\cC_{g,n/k}\to\M_{g,n/k}$,  which we also denote by $\pi$.  Let $K$ be the function field of $\M_{g,n/k}$ \textcolor{black}{for $g\geq 3$}. By a generic section of $\pi$, we mean a $K$-rational point of $\cC_{g,n/k}$.  When $\mathrm{char}(k)=0$, it follows from a result of Earle and Kra \cite{EaKr} that the set of the generic sections of $\pi$ consists of exactly the tautological ones. In \cite{hain2}, Hain gave an algebraic proof to this fact, using the relative and weighted completions of the mapping class groups. \\
 \indent Let $k$ be a field and $\bar k$ be the separable closure of $k$ in a fixed algebraically closed field containing $k$. \textcolor{black}{For a geometrically connected variety or more generally a DM (Deligne-Mumford) stack $X$ over $k$, denote the \'etale fundamental group of $X$ with a geometric point $\bar x$ by $\pi_1(X, \bar x)$. }
 For a proper smooth  \textcolor{black}{geometrically connected} curve $C$ of genus $g$ over $k$ with a base point $\bar y$, there is a short exact sequence of profinite groups
 \begin{equation}1\to \pi_1(\overline{C}, \bar y)\to \pi_1(C, \bar y)\to G_k\to 1,\end{equation}
 where $\overline{C}=C\otimes_k\bar k$  and $G_k$ is the Galois group $\Gal(\bar k/k)$. Each $k$-rational point  $x$ of $C$ induces a section $x_\ast$ of $\pi_1(C, \bar y)\to G_k$ that is well-defined up to the conjugation action of an element in $\pi_1(\overline{C}, \bar y)$. The section conjecture by Grothendieck states that if $k$ is finitely generated over $\Q$ and $g\geq 2$, the function associating to each $k$-rational point $x$ the $\pi_1(\overline{C}, \bar y)$-conjugacy class $[x_\ast]$ of the section $x_\ast$ is a bijection. \\
 \indent The generic curve of type $(g,n)$ is defined to be the pull-back of the universal curve $\pi$ to the generic point $\Spec(K)$ of $\M_{g,n/k}$. Hain showed that if $k$ is a field of characteristic zero, the image of the $\ell$-adic cyclotomic character $\chi_\ell:G_k\to \Zl^\times$ is infinite for some prime number $\ell$, and $g\geq 5$, then the section conjecture holds for the generic curve of type $(g,0)$. Several examples of curves for which the section conjecture holds have been found. However, all of the curves share the same property that their associated homotopy exact sequences (1) do not split and hence they do not admit rational points. Therefore, it is an interesting question to find an example of a curve admitting a $k$-rational point for which the section conjecture holds. \\ 
 \indent Our main result \textcolor{black}{concerns} a relation between the geometric sections of $\pi$ and  the group theoretical sections of $\pi_1(\cC_{g,n/k}, \bar x)\to \pi_1(\M_{g,n/k}, \bar y)$. More precisely, we consider the universal $n$-punctured curve $\pi^o:\M_{g,n+1/k}\to \M_{g,n/k}$. There is an open immersion $j:\M_{g,n+1/k}\to \cC_{g,n/k}$ such that the complement of the image is the union of the images of the tautological sections and there is a commutative diagram:
 $$
 \xymatrix@C=1pc @R=.7pc{
 \M_{g,n+1/k}\ar@{^{(}->}[r]^-j\ar[dr]_{\pi^o}&\cC_{g,n/k}\ar[d]^{\pi}\\
 & \M_{g,n/k}.
 }
 $$ 
Let $k$ be a field of characteristic zero. Denote the tautological sections of $\pi$ by $s_1, \ldots,s_n$. Let $\bar y$ be a geometric point of $\M_{g,n/k}$. Let $C_{\bar y}$ be the fiber of $\pi$ at $\bar y$. Then the fiber $C^o_{\bar y}$ of $\pi^o$ is given by $C_{\bar y}-\{s_1(\bar y), \ldots, s_n(\bar y)\}$. Fix a geometric point $\bar x$ in $C^o_{\bar y}$.
Associated to the universal $n$-punctured curve $\pi^o$, there is a homotopy exact sequence:
$$1\to \pi_1(C^o_{\bar y}, \bar x)\to \pi_1(\M_{g,n+1/\textcolor{black}{\bar k}}, \bar x)\to\pi_1(\M_{g,n/\textcolor{black}{\bar k}}, \bar y)\to 1.$$
\textcolor{black}{This exact sequence induces the exact sequence of proalgebraic groups over $\Ql$ (which is the inverse limit of algebraic groups over $\Ql$)
$$1\to \cP'\to \cG^\geom_{g,n+1}\to \cG^\geom_{g,n}\to 1,$$
where $\cP'$ is the $\ell$-adic  unipotent completion of $ \pi_1(C^o_{\bar y}, \bar x)$ and $\cG^\geom_{g,n}$ is  the relative completion of $\pi_1(\M_{g,n/\bar k}, \bar \ast)$ with respect to the monodromy representation $\pi_1(\M_{g,n/\bar k}, \bar \ast)\to \Sp(H^1_\et(C_{\bar \ast}, \Zl(1)))$ with base point $\bar\ast$ (defined in \S \ref{relative}). 
}
\begin{bigtheorem} 

 Suppose that $k$ is a field of characteristic zero.
 If $g\geq 4$ and $n \geq 0$, then \textcolor{black}{the exact sequence of proalgebraic groups
 $$1\to \cP'\to \cG^\geom_{g,n+1}\to \cG^\geom_{g,n}\to 1$$
 does not split. Consequently, the exact sequence of profinite groups
$$1\to\pi_1(C^o_{\bar y},\bar x)\to\pi_1(\M_{g,n+1/\bar k}, \bar x)\to \pi_1(\M_{g,n/\bar k}, \bar y)\to 1$$
does not split. }

\end{bigtheorem}
Let $S_g$ be a compact oriented topological surface of genus $g$ and $P$ a subset of $S_g$ consisting of $n$ distinct points. 
Denote the mapping class group of $S_g$ fixing $P$ pointwise by $\G_{g,n}$ and the topological fundamental group of the $S_g-P$ by $\Pi'^\top$.  

\begin{bigcorollary}
	If $g\geq 4$ and $n\geq 0$, then the exact sequence 
	$$1\to \Pi'^\top\to\G_{g,n+1}\to \G_{g,n}\to 1$$
	does not split.
\end{bigcorollary}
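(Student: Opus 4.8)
The plan is to deduce the corollary from the Big Theorem by passing from the topological fundamental groups of the moduli stacks to their \'etale fundamental groups via profinite completion. Fix an algebraically closed field $\kbar$ of characteristic zero. Since $\M_{g,n}$ is defined over $\Q$ and the geometric \'etale fundamental group of a Deligne--Mumford stack is invariant under extension of the algebraically closed base field in characteristic zero, it suffices to work over $\C$ and invoke the comparison isomorphisms
$$\pi_1(\M_{g,n/\kbar}, \bar y) \cong \widehat{\G}_{g,n}, \qquad \pi_1(\M_{g,n+1/\kbar}, \bar x) \cong \widehat{\G}_{g,n+1},$$
where $\widehat{(\;\cdot\;)}$ denotes profinite completion and the right-hand sides are the completions of the mapping class groups. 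This rests on the identification of the orbifold fundamental group of $\M_{g,n/\C}^{\an}$ with $\G_{g,n}$, together with Riemann existence for Deligne--Mumford stacks, which identifies $\pi_1^\et$ with the profinite completion of $\pi_1^{\mathrm{orb}}$.

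Next I would record the relevant naturality. The forgetful morphism $\pi^o : \M_{g,n+1} \to \M_{g,n}$ induces on orbifold fundamental groups precisely the projection $p : \G_{g,n+1} \to \G_{g,n}$ of the Birman exact sequence, whose kernel is $\Pi'^\top = \pi_1^\top(S_g - P, \bar x)$; on the fiber $C^o_{\bar y}$ the induced map on $\pi_1^\et$ is the completion of the inclusion $\Pi'^\top \hookrightarrow \pi_1(C^o_{\bar y}, \bar x) = \widehat{\Pi'^\top}$. By naturality of the comparison isomorphisms, the homotopy exact sequence of $\pi^o$ appearing in the Big Theorem is obtained from the Birman exact sequence
$$1 \to \Pi'^\top \to \G_{g,n+1} \to \G_{g,n} \to 1$$
by applying profinite completion; in particular the surjection $\pi_1(\M_{g,n+1/\kbar}, \bar x) \to \pi_1(\M_{g,n/\kbar}, \bar y)$ is identified with the completion $\widehat{p}$ of $p$.

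The core of the argument is then formal. Suppose, for contradiction, that the Birman exact sequence splits, so there is a homomorphism $\sigma : \G_{g,n} \to \G_{g,n+1}$ with $p \circ \sigma = \id$. Since profinite completion is a functor, it produces $\widehat{\sigma} : \widehat{\G}_{g,n} \to \widehat{\G}_{g,n+1}$ with $\widehat{p} \circ \widehat{\sigma} = \widehat{p \circ \sigma} = \id$. Transporting through the comparison isomorphisms, $\widehat{\sigma}$ becomes a continuous section of the \'etale projection $\pi_1(\M_{g,n+1/\kbar}, \bar x) \to \pi_1(\M_{g,n/\kbar}, \bar y)$, so the homotopy exact sequence of $\pi^o$ splits, contradicting the Big Theorem for $g \geq 4$ and $n \geq 0$. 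Hence the Birman exact sequence does not split.

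The steps involving the comparison isomorphisms are standard, and I would cite them rather than reprove them. The one point requiring genuine care is the naturality invoked in the second step: that the \'etale projection is exactly the completion of the mapping class group projection $p$, with matching basepoints and a matching identification of the kernel with $\widehat{\Pi'^\top}$. This is where the realization of $\M_{g,n}(\C)$ as an orbifold $K(\G_{g,n},1)$ and the functoriality of Riemann existence applied to the \emph{morphism} $\pi^o$ of stacks must be used carefully. Granting this, the corollary is a purely formal consequence of the functoriality of profinite completion together with the Big Theorem; in particular, no goodness property or left-exactness of the completed sequence is needed, since only the completion of a section is required.
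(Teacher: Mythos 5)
Your proposal is correct and follows essentially the same route as the paper: the paper's proof likewise identifies the mapping class group projection with the map on orbifold fundamental groups of $\M_{g,n+1/\C}\to\M_{g,n/\C}$ and observes that a section would, upon profinite completion, yield a section of the \'etale homotopy exact sequence, contradicting Theorem 1. Your additional care about naturality of the comparison isomorphisms is sound but is exactly what the paper implicitly invokes.
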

\textcolor{black}{The case  $g\geq 2$ and $n=0$ of the corollary  was} proved in \cite[Cor.~5.11]{FaMa}. For $n >0$, the result should be well-known to experts, but the author is not aware of good references. Theorem 1 is essentially a corollary of the result \cite[Thm.~12.6]{hain0} on the Lie algebra structure of $\Gr^L_\bullet\p_{g,n}$. 
 Hain used the information to compute all the graded Lie algebra sections of a surjection between \textcolor{black}{certain} two-step nilpotent Lie algebras. \textcolor{black}{These  Lie algebras} are constructed from the universal curve $\pi$ \textcolor{black}{using} the relative and weighted completions applied to \textcolor{black}{the} fundamental groups of \textcolor{black}{$\cC_{g,n}$ and $\M_{g,n}$}. \textcolor{black}{A key result for the proof of Theorem 1 is Proposition \ref{no sections of dgn lie algebra projections}, where we prove the analogue of Hain's computation for the universal $n$-punctured curve $\pi^o$.} Theorem 1 can be considered as \textcolor{black}{a variation} of Hain's result \cite[Thm.~2]{hain2} \textcolor{black}{in terms of} the universal $n$-punctured curve $\pi^o$. 

\section{Families of Curves and Monodromy Representation}
\subsection{Mapping class groups}Assume that $2g-2+n > 0$.  Let $S_g$ be a compact oriented topological surface of genus $g$. Let $P$ be a subset of $S_g$ consisting of $n$ distinct points. The mapping class group denoted by $\G_{g,P}$ is the group of isotopy classes of orientation-preserving diffeomorphisms of $S_g$ fixing $P$ pointwise. By classification of surfaces, the group $\G_{g,P}$ is independent of the subset $P$, and hence it will be denoted by $\G_{g,n}$ in this paper. There is a natural action  of  $\G_{g,n}$ on $H_1(S_g,\Z)$ preserving the intersection pairing $\langle~,~\rangle$ and thus there is a natural representation 
$$\G_{g,n}\to \Aut(H_1(S_g,\Z), \langle~,~\rangle)=\Sp(H_1(S_g,\Z)).$$
It is well-known that this homomorphism is surjective. 

\subsection{The universal curve of type $(g,n)$}
 Suppose that $g$ and $n$ are nonnegative integers satisfying $2g-2+n >0$. Suppose that $S$ is a scheme. A curve $f:C\to S$ of \textcolor{black}{type $(g, n)$} is a proper smooth morphism, whose geometric fiber is a connected one-dimensional scheme of arithmetic genus $g$, \textcolor{black}{with $n$ disjoint sections $s_1,\ldots,s_n: S\to C$.}   
 The complement in $C$ of the images of the sections $s_i$ is denoted by $f^o:C^o\to S$. The moduli stack of curves of type $(g,n)$ is denoted by $\M_{g,n}$ and the universal curve of type $(g,n)$ over $\M_{g,n}$ is denoted by $\pi:\cC_{g,n}\to\M_{g,n}$. It is proved in \cite{knu} that the stack $\M_{g,n}$ is a smooth DM-stack over $\Z$. For each curve $f:C\to S$ of type $(g,n)$, there exists a unique morphism 
 $\phi:S\to \M_{g,n}$ such that the curve $f$ is \textcolor{black}{isomorphic} to the pull-back of the universal curve $\pi:\cC_{g,n}\to\M_{g,n}$ along the morphism $\phi$. The universal curve $\cC_{g,n}\to \M_{g,n}$ admits $n$ tautological sections and the complement of the images of the tautological sections  is defined to be the universal $n$-punctured curve  $\pi^o:\M_{g,n+1}\to \M_{g,n}$. The curve $f^o:C^o\to S$ is then isomorphic to the fiber product $S\times_{\M_{g,n}}\M_{g, n+1}\to S$.  
\subsection{Exact sequences and monodromy  associated to curves of type $(g,n)$}
Let $k$ be a field of characteristic zero. 
Suppose that $S$ is a geometrically connected scheme over $k$. Let $f:C\to S$ be a curve of type $(g, n)$. Let $\bar y$ be a geometric point of $S$ and let $C_{\bar y}$  and $C^o_{\bar y}$ be the fibers over $\bar y$ of $f$ and $f^o$, respectively. Note that $C^o_{\bar y}=C_{\bar y}-\{s_1(\bar y),\ldots,s_n(\bar y)\}$. Let $\bar x$ be a geometric point of $C^o_{\bar y}$. We also consider $\bar x$ as a geometric point of $C_{\bar y}$ via the open immersion $C^o_{\bar y}\to C_{\bar y}$.  Denote $\pi_1(C_{\bar y}, \bar x)$  and $\pi_1(C^o_{\bar y},\bar x)$ by $\Pi$ and $\Pi'$, respectively. 
\textcolor{black}{Although the following result is well-known, we state it here for its importance in this paper. For example, the case of $f^o$ follows immediately from \cite[Lemma~2.1 ]{mata} and a similar argument can be applied to the case of $f$. } 
\begin{proposition}\label{exact seq} If $g>1$, then there are exact sequences of profinite groups:
$$1\to \Pi\to \pi_1(C,\bar x)\to \pi_1(S, \bar y)\to 1$$
and
$$1\to \Pi'\to \pi_1(C^o, \bar x)\to \pi_1(S, \bar y)\to 1.$$
\end{proposition}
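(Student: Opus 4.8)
The plan is to recognize both sequences as homotopy exact sequences of smooth morphisms with geometrically connected fibers and to deduce them from Grothendieck's theory of the fundamental group of a fibration (SGA~1).

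First I would treat the proper case. The morphism $f:C\to S$ is proper and smooth, and its geometric fibers $C_{\bar y}$ are smooth proper curves of genus $g$, hence connected; the hypothesis $g>1$ forces each $C_{\bar y}$ to be hyperbolic. Right-exactness of
$$\pi_1(C_{\bar y},\bar x)\to\pi_1(C,\bar x)\to\pi_1(S,\bar y)\to 1$$
is the homotopy exact sequence for a proper morphism with geometrically connected fibers: surjectivity onto $\pi_1(S,\bar y)$ follows from $f_\ast\O_C=\O_S$ holding universally, and exactness in the middle from proper base change, which identifies finite \'etale covers of $C$ that are trivial along $C_{\bar y}$ with pullbacks from $S$. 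The substantive point is injectivity of $\pi_1(C_{\bar y},\bar x)\to\pi_1(C,\bar x)$, i.e.\ that every connected finite \'etale cover of the fiber is dominated by the restriction of a cover of $C$. Here I would invoke exactness on the left of the homotopy sequence for a family of hyperbolic curves in characteristic zero: since $\Char(k)=0$ one spreads out a given cover of $C_{\bar y}$ over an \'etale neighborhood of $\bar y$ and, using tameness and hyperbolicity, extends it to a cover of $C$ restricting to the given one.

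For the open curve I would pass to a good compactification. The $n$ sections $s_1,\dots,s_n$ are disjoint, each $s_i(S)$ is smooth and finite \'etale over $S$, so $D=\bigcup_i s_i(S)$ is a relative normal crossings divisor in $C$ over $S$ (the crossings are empty and $\Char(k)=0$ makes the boundary tame); thus $f^o:C^o=C-D\to S$ is a smooth morphism admitting the good compactification $f:C\to S$. Its geometric fibers $C^o_{\bar y}=C_{\bar y}-\{s_1(\bar y),\dots,s_n(\bar y)\}$ are connected and hyperbolic, as $2g-2+n>0$. Applying the homotopy exact sequence for a smooth morphism with a tame good compactification --- equivalently, for a family of hyperbolic curves --- yields
$$1\to\pi_1(C^o_{\bar y},\bar x)\to\pi_1(C^o,\bar x)\to\pi_1(S,\bar y)\to 1.$$
The middle and right exactness can also be obtained from the proper case using the surjections $\pi_1(C^o_{\bar y})\twoheadrightarrow\pi_1(C_{\bar y})$ and $\pi_1(C^o)\twoheadrightarrow\pi_1(C)$, the latter from the normality of $C$.

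I expect the main obstacle to be the left-injectivity in both sequences. Right-exactness is formal from proper base change, but exactness at the fiber group is precisely where the hypotheses $\Char(k)=0$ (tameness at the punctures, absence of wild inertia) and $g>1$ (hyperbolic fibers, whose fundamental group is centerless) are indispensable; it is the only step that needs genuine input beyond the formal properties recorded in SGA~1.
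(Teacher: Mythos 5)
Your overall skeleton agrees with the paper's up to a point: right-exactness is the formal part, and the entire difficulty is the injectivity of $\Pi\to\pi_1(C,\bar x)$ (resp.\ $\Pi'\to\pi_1(C^o,\bar x)$). But the mechanism you offer for that crucial step does not work as described. Spreading out a finite \'etale cover of $C_{\bar y}$ gives a cover of $f^{-1}(U)$ for some \'etale neighborhood $U\to S$ of $\bar y$; there is no general procedure, tame or not, for extending such a cover to all of $C$ so that it restricts to (or dominates) the given cover of the fiber. That extension statement \emph{is} the left-exactness you are trying to prove, so as written the argument is circular. Left-exactness of the homotopy sequence is genuinely false for arbitrary smooth morphisms with connected fibers, so some input specific to hyperbolic curves must actually be deployed, not just invoked.

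The paper's proof supplies that input differently. It first reduces to $k=\bar k$ (using injectivity of $\pi_1(X\otimes_k\bar k)\to\pi_1(X)$ and a diagram chase against the right-exact rows), then to $k=\C$ by invariance of $\pi_1$ under extension of algebraically closed fields in characteristic zero. Over $\C$ the topological fibration sequence gives everything at the level of discrete groups; the remaining issue --- which your proposal never isolates --- is that profinite completion need not preserve injectivity. This is where center-freeness actually does the work: the conjugation action gives $\pi_1^\top(C,\bar x)\to\Aut(\Pi^\top)$ whose restriction to $\Pi^\top$ is the self-conjugation action, and after completion the composite $\Pi\to\pi_1(C,\bar x)\to\Aut(\Pi)$ is injective because $\Pi$ has trivial center (Anderson); hence so is the first map. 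The same argument with $\Pi'$ free (hence center-free) handles the open case. You mention centerlessness only as a parenthetical heuristic at the end; to repair your proof you would need to either run this completion-plus-center-freeness argument or cite a precise reference for exactness of the profinite homotopy sequence for families of hyperbolic curves, rather than the spread-out-and-extend sketch.
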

Set $H_\Zl = H^1_\et(C_{\bar y}, \Zl(1))$. Note that as a $G_k$-module, $H_\Zl$ is isomorphic to $\pi_1(C_{\bar y}, \bar x)^\ab \otimes \Zl$ and is the dual of $H^1_\et(C_{\bar y}, \Zl)$, where the superscript $\ab$ denotes abelianization. The cohomology group $H_\Zl$ is a free $\Zl$-module of rank $2g$ endowed with the cup product pairing $\theta: \Lambda^2H_\Zl\to \Zl(1)$. Recall that the general symplectic group $\GSp(H_\Zl)$ is defined as
$$\GSp(H_\Zl) =\{T \in \GL(H_\Zl)| ~T^\ast \theta = \lambda_T\theta \text{ for some } \lambda_T \in \Zl^\times \}.$$
Associating the scalar $\lambda_T$  to $T$ defines a surjective homomorphism $\tau:\GSp(H_\Zl)\to \Zl^\times$. The symplectic group $\Sp(H_\Zl)$ is defined to be the kernel of $\tau$. 
\textcolor{black}{Now, since $R^1f_\ast\Zl(1)$ is a smooth $\ell$-adic sheaf over $S$, we have a representation}
$$\rho_{\bar y}: \pi_1(S, \bar y)\to \GSp(H_\Zl),$$
and there is a commutative diagram
$$\xymatrix@C=1em@R=1em{
1\ar[r]&\pi_1(S\otimes_k\bar k, \bar y)\ar[r]\ar[d]^{\rho^\geom_{\bar y}}&\pi_1(S, \bar y)\ar[r]\ar[d]^{\rho_{\bar y}}&G_k\ar[r]\ar[d]^{\chi_\ell}&1\\
1\ar[r]&\Sp(H_\Zl)\ar[r]&\GSp(H_\Zl)\ar[r]^-\tau&\Zl^\times\ar[r]&1,
}
$$
where we denote the representation at left by $\rho^\geom_{\bar y}$. 
 In this paper, the weighted completion (see section \ref{weight}) of the monodromy representation $\rho_{\bar y}$ associated to the universal curve plays a key role. \textcolor{black}{The following result is also well-known and it  follows easily from the fact that the natural representation $\G_{g,n}\to \Sp(H_1(S_g,\Z))$ is surjective.}  \textcolor{black}{For example, see   \cite[\S 8]{hain2}.} Set $H_\Ql:=H_\Zl\otimes\Ql$. 
\begin{proposition}\label{monodromy density} 
  Let $k$ be a field of characteristic zero.  If $g\geq 2$,  $\ell$ is a prime number, and if the image of the $\ell$-adic cyclotomic character $\chi_\ell: G_k\to \Zl^\times$ is infinite, then the image of the monodromy representation 
 $$\rho_{\bar y}:\pi_1(\M_{g,n/k}, \bar y)\to \GSp(H_\Ql)$$ is Zariski-dense in $\GSp(H_\Ql)$. 
\end{proposition}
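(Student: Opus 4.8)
The plan is to identify the Zariski closure $G$ of $\im\rho_{\bar y}$ in $\GSp(H_\Ql)$ directly, using the exact sequence $1\to\Sp(H_\Ql)\to\GSp(H_\Ql)\xrightarrow{\tau}\Gm\to 1$ together with the commutative diagram preceding the statement. I would reduce the desired equality $G=\GSp(H_\Ql)$ to two independent claims: that $\Sp(H_\Ql)\subseteq G$, and that $\tau(G)=\Gm$. Granting both, for any $T\in\GSp(H_\Ql)$ there is $h\in G$ with $\tau(h)=\tau(T)$, whence $Th^{-1}\in\ker\tau=\Sp(H_\Ql)\subseteq G$ and therefore $T=(Th^{-1})h\in G$; since $G$ is a subgroup this gives $G=\GSp(H_\Ql)$.

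The equality $\tau(G)=\Gm$ comes from the similitude character. Because $\im\rho^\geom_{\bar y}$ lands in $\Sp(H_\Ql)=\ker\tau$, the composite $\tau\circ\rho_{\bar y}$ is trivial on $\pi_1(\M_{g,n/\bar k},\bar y)$ and so factors through $G_k$, where by commutativity of the right-hand square it equals the cyclotomic character $\chi_\ell$. Thus $\tau(\im\rho_{\bar y})=\im\chi_\ell$ is infinite by hypothesis. As $\tau$ is a homomorphism of algebraic groups, $\tau(G)$ is a Zariski-closed subgroup of $\Gm$ containing this infinite set, and since every proper closed subgroup of $\Gm$ is finite, $\tau(G)=\Gm$.

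For the inclusion $\Sp(H_\Ql)\subseteq G$ I would prove the stronger statement that the geometric monodromy $\rho^\geom_{\bar y}$ already has Zariski-dense image in $\Sp(H_\Ql)$; since $\im\rho^\geom_{\bar y}\subseteq\im\rho_{\bar y}$ by commutativity of the left-hand square, this forces $\Sp(H_\Ql)\subseteq G$. As $\mathrm{char}(k)=0$, an extension of algebraically closed fields does not change the fundamental group, so I may replace $\bar k$ by $\C$ and identify $\pi_1(\M_{g,n/\C},\bar y)$ with the profinite completion of the mapping class group $\G_{g,n}$. Under this comparison $\rho^\geom_{\bar y}$ is the continuous extension of the symplectic representation $\G_{g,n}\to\Sp(H_1(S_g,\Z))\cong\Sp_{2g}(\Z)$, which is surjective as recalled above; hence $\im\rho^\geom_{\bar y}$ contains the abstract group $\Sp_{2g}(\Z)$.

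It then remains to invoke that $\Sp_{2g}(\Z)$ is Zariski-dense in $\Sp(H_\Ql)$: the group $\Sp_{2g}$ is connected semisimple over $\Q$, and $\Sp_{2g}(\Z)$ is a lattice in the semisimple Lie group $\Sp_{2g}(\mathbb{R})$, which has no compact factors, so by the Borel density theorem $\Sp_{2g}(\Z)$ is Zariski-dense over $\mathbb{R}$, hence over $\Q$, and hence over $\Ql$ after base change. I expect the main obstacle to be this final step together with the comparison isomorphism: one must correctly package the surjectivity of the symplectic representation, the passage from the topological to the $\ell$-adic picture, and the density of an arithmetic group in its ambient algebraic group. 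Once these are in place, combining $\Sp(H_\Ql)\subseteq G$ with $\tau(G)=\Gm$ yields $G=\GSp(H_\Ql)$, as desired.
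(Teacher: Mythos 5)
Your proof is correct, and its overall skeleton --- reduce to (a) Zariski-density of the geometric monodromy in $\Sp(H_\Ql)$ and (b) surjectivity of $\tau$ on the closure via the infinitude of $\im\chi_\ell$, then combine using the exact sequence $1\to\Sp\to\GSp\xrightarrow{\tau}\Gm\to1$ --- is the same as the paper's. The genuine difference is in how step (a) is established. The paper passes to profinite completions: it uses the surjection $\G_{g,n}\to\Sp(H_1(S_g,\Z))$ and the isomorphisms $\Sp(\Z)^\wedge\cong\Sp(\widehat{\Z})\cong\prod_\ell\Sp(\Zl)$ from Hain--Matsumoto (which encode the congruence subgroup property for $\Sp_{2g}(\Z)$, $g\geq2$) to conclude that $\rho^\geom_{\bar y}$ is actually \emph{surjective} onto $\Sp(\Zl)$, whence Zariski-dense. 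You instead observe that the image merely \emph{contains} the abstract group $\Sp_{2g}(\Z)$ and invoke the Borel density theorem. Your route is more elementary in that it avoids the congruence subgroup property entirely and needs only that $\Sp_{2g}(\Z)$ is a lattice in a semisimple real group without compact factors; the paper's route proves more (surjectivity onto $\Sp(\Zl)$, which is what is genuinely needed elsewhere when one works with $\Zl$- rather than $\Ql$-coefficients), at the cost of citing a deeper arithmetic input. Your explicit verification that $G\supseteq\Sp(H_\Ql)$ and $\tau(G)=\Gm$ force $G=\GSp(H_\Ql)$ is also a welcome elaboration of the paper's terse ``it follows that''; just note that the element-chasing there should be read on $\overline{\Q}_\ell$-points (or replaced by the observation that $G/\Sp\hookrightarrow\Gm$ is closed and infinite, hence all of $\Gm$, so $G$ and $\GSp$ agree as extensions of $\Gm$ by $\Sp$), since $\tau(G)=\Gm$ as algebraic groups does not by itself give surjectivity on $\Ql$-points.
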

%
%
%

\section{Review of a minimal presentation of Lie Algebra $\Gr_\bullet\p_{g,n}$}

\subsection{Pure braid groups on $S_g$} Let $g$ and $n$ be  positive integers. The configuration space $F_{g,n}$ of $n$ \textcolor{black}{ordered} points on $S_g$ is defined to be 
$$F_{g,n}=S_g^n-\bigcup_{i\not=j}\left\{x_i=x_j\right\}.$$
The topological fundamental group $\pi_1(F_{g,n}, x)$ with a fixed base point $x$ of $F_{g,n}$ is  the group of pure braids with $n$ strings on $S_g$. The topological fundamental group $\pi_1^\top(F_{g,n},x)$ is denoted by $\pi^\top_{g,n}$. 

\subsection{The unipotent completion $\cP_{g,n}$ of $\pi^\top_{g,n}$ and its Lie algebra $\p_{g,n}$}
Let $F$ be a field of characteristic zero. Suppose that $\G$ is a group. The unipotent completion
 of $\G$ is a prounipotent group $\G^\un$ over $F$ equipped with a natural Zariski-dense homomorphism $\rho: \G\to \G^\un(F)$ satisfying the universal property: for a Zariski-dense homomorphism $\rho_U: \G\to U(F)$ into the $F$-rational points of a prounipotent $F$-group $U$, there is a unique morphism $\phi_U:\G^\un\to U$ such that $\rho_U=\phi_U(F)\circ \rho.$ When $H_1(\G, F)$ is finite-dimensional, there is a concrete construction of $\G^\un$ using the completed group algebra $F[\G]^\wedge$ (see \cite[Appendix A]{malcev} and \cite{bouse}). \\
 \indent Let $\cP_{g,n}$ be the unipotent completion of $\pi^\top_{g,n}$ over $\Q$. For any field $F$ of characteristic zero, the unipotent completion $\cP_{g,n/F}$ of $\pi^\top_{g,n}$ over $F$ is canonically isomorphic to $\cP_{g,n}\otimes_\Q F$ by \textcolor{black}{the} universal property \textcolor{black}{of the unipotent completion}. Denote the Lie algebra of $\cP_{g,n}$ by $\p_{g,n}$. It is the inverse limit of finite-dimensional nilpotent Lie algebras over $\Q$, which we call a pronilpotent Lie algebra over $\Q$. A key ingredient of the proof of Theorem 1 is the structure of the graded Lie algebra $\Gr_\bullet\p_{g,n}$ associated with its lower central series $L^\bullet\p_{g,n}$.  
 \subsection{A minimal presentation}
 Firstly, we recall the following fact:
 \begin{proposition}[{\cite[Prop.~2.1.]{hain0}}] \label{abelianization pure weight -1}If $g\geq 0$ and $n\geq 0$, then there are isomorphisms
 $$H_1(\p_{g,n})\cong H_1(\pi^\top_{g,n},\Q)\cong H_1(S_g^n,\Q)\cong\bigoplus_{i=1}^nH_1(S_g, \Q)_i.$$ 
 \end{proposition}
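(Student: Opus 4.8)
The plan is to treat the three isomorphisms separately. The rightmost one, $H_1(S_g^n,\Q)\cong\bigoplus_{i=1}^n H_1(S_g,\Q)_i$, is immediate from the Künneth formula together with $H_0(S_g,\Q)=\Q$. For the leftmost one I would invoke the general principle that the Lie algebra of a unipotent completion recovers rational $H_1$ on its abelianization. Since $\p_{g,n}$ is pronilpotent, $H^1(\p_{g,n})=\Homcts(\p_{g,n},\Q)=(\p_{g,n}^\ab)^\ast$, and by the universal property of $\cP_{g,n}$ the continuous $\Q$-characters of $\p_{g,n}$ correspond to homomorphisms $\pi^\top_{g,n}\to\Q$, so $\Homcts(\p_{g,n},\Q)\cong\Hom(\pi^\top_{g,n},\Q)=H^1(\pi^\top_{g,n},\Q)$. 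As this space is finite-dimensional (by the two isomorphisms to its right), dualizing yields $H_1(\p_{g,n})\cong H_1(\pi^\top_{g,n},\Q)$.

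The heart of the matter is the middle isomorphism. Since the degree-one rational homology of a space coincides with that of its fundamental group, it suffices to compute $H_1(F_{g,n},\Q)$, for which I would work over $\C$ and exploit the open immersion $\iota:F_{g,n}\hookrightarrow S_g^n$. Its complement is the diagonal arrangement $\Delta=\bigcup_{i<j}\Delta_{ij}$, where each $\Delta_{ij}=\{x_i=x_j\}$ is a smooth divisor of complex codimension one. The associated Gysin (residue) sequence in rational cohomology reads
$$0\to H^1(S_g^n,\Q)\xrightarrow{\iota^\ast}H^1(F_{g,n},\Q)\xrightarrow{\mathrm{Res}}\bigoplus_{i<j}H^0(\Delta_{ij},\Q)\xrightarrow{\gamma}H^2(S_g^n,\Q),$$
where the Gysin map $\gamma$ sends the generator supported on $\Delta_{ij}$ to the divisor class $[\Delta_{ij}]$. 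The problem thus reduces to showing $\ker\gamma=0$, i.e. that the classes $[\Delta_{ij}]$ are linearly independent in $H^2(S_g^n,\Q)$; the resulting isomorphism $\iota^\ast$, dualized, then gives $H_1(F_{g,n},\Q)\cong H_1(S_g^n,\Q)$.

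To prove this independence I would decompose $[\Delta_{ij}]$ via Künneth. The class of the diagonal in $S_g\times S_g$ has a component in $H^1(S_g)\otimes H^1(S_g)$ equal to the (nondegenerate, for $g\geq 1$) cup-product form $\sum_a\alpha_a\otimes\beta_a$; placed in the $i$-th and $j$-th factors, this shows that $[\Delta_{ij}]$ has a nonzero component in the summand $H^1(S_g)_i\otimes H^1(S_g)_j$ of $H^2(S_g^n,\Q)$. Since distinct pairs occupy distinct Künneth summands, projecting a relation $\sum_{i<j}c_{ij}[\Delta_{ij}]=0$ onto the $(i,j)$-summand forces each $c_{ij}=0$, whence $\ker\gamma=0$. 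I expect the main obstacle to be technical rather than conceptual: justifying the degree-one residue sequence in the presence of the deeper diagonals $\{x_i=x_j=x_k\}$, so that $\Delta$ is not literally a simple-normal-crossings divisor, and recording that the independence argument genuinely uses $g\geq 1$, since for $g=0$ the relevant $H^1\otimes H^1$ components vanish. One could alternatively run an induction on $n$ via the Fadell--Neuwirth fibration $S_g-\{n-1\text{ points}\}\to F_{g,n}\to F_{g,n-1}$ and the five-term exact sequence, but the Gysin approach makes the vanishing of the meridian classes around the diagonals most transparent.
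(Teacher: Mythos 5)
The paper itself offers no proof of this proposition --- it is quoted from \cite{hain0} as Proposition~2.1 --- so there is nothing internal to compare against; judged on its own terms, your argument is essentially correct for $g\geq 1$, and the two obstacles you single out are exactly the right ones. The non-normal-crossings issue is harmless: for the degree-one residue sequence you only need $H^k_D(S_g^n)$ for $k\leq 2$, where $D=\bigcup_{i<j}\Delta_{ij}$, and since the pairwise intersections of the $\Delta_{ij}$ have complex codimension at least $2$, purity lets you discard them, identify $H^2_D(S_g^n)$ with $\bigoplus_{i<j}H^0(\Delta_{ij},\Q)$ (each $\Delta_{ij}$ minus the deeper diagonals stays connected), and recognize the connecting map as the Gysin map $1_{ij}\mapsto[\Delta_{ij}]$. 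The independence argument is also sound, since $[\Delta_{ij}]=p_{ij}^\ast[\Delta]$ and the $H^1\otimes H^1$ Künneth component of the diagonal class of $S_g\times S_g$ is the nondegenerate intersection form, landing in the summand $H^1(S_g)_i\otimes H^1(S_g)_j$, which varies with the pair $\{i,j\}$. The identification $H_1(\p_{g,n})\cong H_1(\pi^\top_{g,n},\Q)$ via characters is the standard $H^1$-isomorphism property of Malcev/unipotent completion. Your alternative via the Fadell--Neuwirth fibrations and the five-term sequence is the route more commonly taken in the literature, but the Gysin argument is cleaner for seeing why the meridians die.

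Your parenthetical about $g\geq 1$ deserves to be promoted from a caveat to a correction: as transcribed, the proposition asserts the isomorphisms for all $g\geq 0$, and for $g=0$, $n\geq 4$ the conclusion is false. Indeed $F_{0,4}\cong \mathrm{PGL}_2(\C)\times(\P^1-\{0,1,\infty\})$, so $H_1(\pi^\top_{0,4},\Q)\cong\Q^2\neq 0=H_1((S^2)^4,\Q)$; in your Gysin sequence this failure is visible because for $g=0$ each $[\Delta_{ij}]$ is the sum of the generators of $H^2(S^2)_i$ and $H^2(S^2)_j$, and the $\binom{n}{2}$ such classes cannot be independent in the $n$-dimensional space $\bigoplus_i H^2(S^2)_i$ once $n\geq 4$, so $\ker\gamma\neq 0$. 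The hypothesis should read $g\geq 1$, consistent with Section~3.1 (where $g$ is taken to be a positive integer) and with every later use of the proposition in the paper; with that correction your proof is complete.
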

  Secondly,  we recall a minimal presentation of a graded Lie algebra with negative weights.  \textcolor{black}{We note that such a Lie algebra is necessarily pronilpotent.}  \textcolor{black}{For a vector space $V$, denote the free Lie algebra generated by $V$ by $\L(V)$.} If $\n$ is a graded Lie algebra with negative weights, then there is a Lie algebra surjection $\phi: \L(H_1(\n))\to \n$ from the free Lie algebra generated by $H_1(\textcolor{black}{\n})$ onto $\n$.  Denote the kernel of $\phi$ by $R$. A minimal presentation of $\n$ is given by 
 $$\L(H_1(\n))/R\cong \n.$$
  For $u\in H_1(S_g, \Q)$, we will denote the corresponding vector in $H_1(S_g, \Q)_i$ by $u^{(i)}$. Fix a symplectic basis $a_1, b_1,\ldots, a_{\textcolor{black}{g}},b_{\textcolor{black}{g}}$ for $H_1(S_g)$. Denote the elements $\sum_{k=1}^{\textcolor{black}{g}}[a_k^{(i)}, b_k^{(i)}]$   and $\sum_{k=1}^{\textcolor{black}{g}}[a_k^{(i)}, b_k^{(j)}]$ for $i\not=j$ in $\L(H_1(S_g)^{\oplus n})$ by $\Theta_i$ and $\Theta_{ij}$, respectively. For $u$ and $v$ in $H_1(S_g
   )$, we will denote the intersection number of $u$ and $v$ by $(u,v)$. Denote the lower central series of $\p_{g,n}$ by $L_\bullet\p_{g,n}$.  We have the following minimal presentation of $\Gr^L_\bullet\p_{g,n}$ by Hain:
   \begin{theorem}[{\cite[Thm.~12.6 ]{hain0}}] \label{presentation for configuration space}For $g\geq 1$ and $n\geq 0$, there is an isomorphism of graded Lie algebras
 $$\Gr^L_\bullet \p_{g,n}\cong \L(H_1(S_g)^{\oplus n})/R,$$
 where $R$ is the Lie ideal generated by the vectors of the form
 \begin{align*}
 [u^{(i)}, v^{(j)}]-[u^{(j)}, v^{(i)}]\hspace{1in}\text{for all $i$ and $j$};& \\
 [u^{(i)}, v^{(j)}]-\frac{(u,v)}{g}\sum_{k=1}^g\Theta_{ij}\hspace{1in}\text{for }i\not=j;& \\
 \Theta_i + \frac{1}{g}\sum_{j\not=i}\Theta_{ij}\hspace{1in}\text{for }1\leq i\leq n,&
 \end{align*}

 with arbitrary $u,v\in H_1(S_g)$.

 \end{theorem}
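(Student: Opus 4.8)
The plan is to identify $\Gr^L_\bullet\p_{g,n}$ with the \emph{holonomy Lie algebra} of the configuration space $F_{g,n}$, that is, the quotient of the free Lie algebra on $H_1(F_{g,n},\Q)$ by the ideal generated by the image of the dual cup product $H_2(F_{g,n},\Q)\to\Lambda^2H_1(F_{g,n},\Q)$, and then to match that image with the three stated families. Proposition \ref{abelianization pure weight -1} already supplies the generators: it gives $H_1(\p_{g,n})\cong\bigoplus_{i=1}^nH_1(S_g,\Q)_i=H_1(F_{g,n},\Q)$, so the free Lie algebra in the statement is exactly $\L(H_1(\p_{g,n}))$ and the canonical surjection $\L(H_1(\p_{g,n}))\to\Gr^L_\bullet\p_{g,n}$ is the one in the theorem. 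Two things then remain: (a) to show that $F_{g,n}$ is $1$-formal, so that $\Gr^L_\bullet\p_{g,n}$ is \emph{quadratically} presented and the holonomy description is valid; and (b) to compute the quadratic relations and show they generate precisely $R$.

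For $1$-formality I would argue through mixed Hodge theory, which is the engine of Hain's method. Since $F_{g,n}=S_g^n-\bigcup_{i\neq j}\{x_i=x_j\}$ is a smooth complex variety, the Lie algebra $\p_{g,n}$ carries a functorial mixed Hodge structure whose weight filtration $W_\bullet$ is compatible with the lower central series $L_\bullet$. Dualizing Proposition \ref{abelianization pure weight -1}, the restriction map $H^1(S_g^n,\Q)\to H^1(F_{g,n},\Q)$ is an isomorphism of mixed Hodge structures; since $S_g^n$ is smooth projective, $H^1(F_{g,n})$ is therefore \emph{pure of weight $1$}, equivalently $H_1(\p_{g,n})$ is pure of weight $-1$. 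Thus $\p_{g,n}$ is generated in weight $-1$; because the bracket is a morphism of mixed Hodge structures and such morphisms are strict for $W_\bullet$, a bracket of length $\ell$ lands in weight $-\ell$, the weight grading coincides with the lower-central-series grading, and every relation is forced into weight $-2$. This is exactly what makes the presentation quadratic.

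It then remains to compute the weight $-2$ relations, i.e. the image of $H_2(F_{g,n},\Q)\to\Lambda^2H_1(F_{g,n},\Q)$, for which I would determine the ring $H^\bullet(F_{g,n},\Q)$ from the Gysin/Leray spectral sequence of $F_{g,n}\hookrightarrow S_g^n$, whose input is $H^\bullet(S_g^n)$ together with the classes of the diagonals $\Delta_{ij}=\{x_i=x_j\}$. The three families correspond to three pieces of cup-product data. The Künneth symmetry of $H^\bullet(S_g^n)$ yields $[u^{(i)},v^{(j)}]=[u^{(j)},v^{(i)}]$. The Künneth decomposition of the diagonal, $[\Delta]=[\mathrm{pt}]\otimes 1+1\otimes[\mathrm{pt}]-\sum_{k=1}^g(a_k\otimes b_k-b_k\otimes a_k)$, shows that the mixed-factor component of $[\Delta_{ij}]$ is the intersection form viewed in $H^1(S_g)_i\otimes H^1(S_g)_j$; dualizing gives $[u^{(i)},v^{(j)}]=\tfrac{(u,v)}{g}\Theta_{ij}$ for $i\neq j$, with $(u,v)$ the coefficient of the pairing of $u\otimes v$ against the symplectic part of $[\Delta]$. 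Finally, the relation forced on the point class of each factor after the diagonals are removed, combined with the identity $\sum_k a_k\cup b_k=g\,[\mathrm{pt}]$, yields $\Theta_i+\tfrac1g\sum_{j\neq i}\Theta_{ij}=0$; here $g\geq 1$ guarantees that the point class is $\tfrac1g\sum_k a_k\cup b_k$ and produces the factor $1/g$. (As a consistency check, for $n=1$ this collapses to the single surface relation $\Theta_1=\sum_k[a_k,b_k]=0$.)

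I expect the principal obstacle to be the formality/weight step: justifying that purity of $H^1$ forces the weight filtration to agree with the lower central series and hence that no relations of bracket-length $>2$ survive. This is where the mixed Hodge structure on $\p_{g,n}$ and the strictness of its morphisms must be used with care, and it is the conceptual heart of the argument. A secondary, computational difficulty is the exact bookkeeping of coefficients in the cup-product calculation—tracking the diagonal and self-intersection contributions so that the constants $1/g$ and the precise combinations $\Theta_i,\Theta_{ij}$ emerge correctly—and verifying that these relations span the \emph{entire} weight $-2$ kernel rather than a proper subspace. The natural way to close this last gap is an $\Sp(H_1(S_g))$-equivariant count: both the kernel and the span of the listed relations are representations of the symplectic group, and comparing their decompositions forces equality.
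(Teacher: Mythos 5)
First, a point of context: the paper does not prove this statement at all --- it is quoted from \cite[Thm.~12.6]{hain0}, and Hain's own argument there proceeds by induction on $n$ via the point-forgetting fibrations $F_{g,n}\to F_{g,n-1}$: the associated exact sequences of graded Lie algebras reduce the problem to the closed surface (where the single relation $\Theta=0$ comes from formality of compact K\"ahler manifolds) and to punctured surfaces (whose Malcev Lie algebras are free). Your route --- $1$-formality of the open variety $F_{g,n}$ plus a computation of its holonomy Lie algebra --- is genuinely different; it is essentially Bezrukavnikov's approach to the same theorem. Your identification of the generators via Proposition \ref{abelianization pure weight -1} and your derivation of the three families of quadratic relations from the K\"unneth decomposition of the diagonal classes are right in outline, modulo two caveats you partly acknowledge: for $n\ge 3$ the union of the diagonals is not a normal crossings divisor, so computing $H^{\bullet}(F_{g,n})$ with its weight filtration needs the spectral sequence of the stratification (or a wonderful compactification) rather than a single Gysin sequence; and the claim that the listed relations exhaust the kernel of the cup product is asserted, not proved.

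The genuine gap is the step you yourself flag as the conceptual heart: purity of $H^1(F_{g,n})$ in weight $1$ does \emph{not} imply that the presentation is quadratic. What purity plus strictness gives is that the weight filtration on $\p_{g,n}$ coincides with its lower central series --- exactly what the present paper extracts from Proposition \ref{abelianization pure weight -1} in Section 4 --- but this says nothing about where the relation ideal is generated. In the Morgan--Hain presentation of $\Gr^W_\bullet$ of the Malcev Lie algebra of a smooth open variety, the relations are generated in weights $-2$, $-3$, $-4$, with the weight $-3$ and $-4$ generators controlled by $\Gr^W_3H^2(F_{g,n})$ and $\Gr^W_4H^2(F_{g,n})$; when $H_1$ is pure of weight $-1$ these are honest cubic and quartic relations, and for a non-proper variety these graded pieces of $H^2$ need not vanish even when $H^1$ is pure. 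You must either show they vanish for $F_{g,n}$ or show that the corresponding relations follow from the quadratic ones, and this is precisely where the genus has to enter: the standard counterexample to ``purity of $H^1$ implies $1$-formality'' is the configuration space of $n\ge 3$ points on an elliptic curve, whose graded Lie algebra requires independent cubic relations of the form $[\Theta_{ij},u^{(k)}]=0$ for $k\ne i,j$. Since your formality argument nowhere uses $g$, it would ``prove'' the purely quadratic presentation in that case as well, so it cannot be complete as written. (This borderline case is also why the hypothesis $g\ge1$ in the quoted statement deserves care; the present paper only ever invokes the theorem for $g\ge4$.)
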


\section{Two-step nilpotent Lie algebras associated to Universal ($n$-punctured) Curves}\label{weight}
The weighted completion of a profinite group is introduced and developed by Hain and Matsumoto  and a detailed introduction of the theory and properties are included in \cite{wei} and the reader can find a brief introduction in \cite[\S6 \& \S 7]{hain2} on which this paper is based. 
\subsection{Weighted completion applied to the fundamental groups of universal curves}\label{Applications to universal curves}
Suppose that $g\geq 2$ and $n\geq 0$. Let $k$ be a field of characteristic zero such that for some prime $\ell$, the $\ell$-adic cyclotomic character $\chi_\ell:G_k\to \Zl^\times$ has infinite image. 
  Note that when $k$ is 
a number field or a local field $\Q_q$ with $q\not=\ell$, $\chi_\ell$ has infinite image.
Fix a geometric point $\bar y$ in $S$.  By Proposition \ref{monodromy density}, we have a Zariski-dense representation
$$\rho_{\bar y}: \pi_1(\M_{g,n/k},\bar y)\to \GSp(H_\Ql).$$
Define a central cocharacter $\omega:\Gm\to \GSp(H_\Ql)$ by sending $z$ to $z^{-1}\id$. This technical definition of $\omega$ is made so that \textcolor{black}{the weight determined by $\omega$ agrees with the weights defined by Hodge theory and Galois theory.} Denote the weighted completion of $\pi_1(\M_{g,n/k},\bar y)$ with respect to $\rho_{\bar y}$ and $\omega$ by 
$$(\cG_{g,n},\,\, \tilde{\rho}_{\bar y}:\pi_1(\M_{g,n/k},\bar y)\to \cG_{g,n}(\Ql)).$$
The completion $\cG_{g,n}$ is a negatively weighted\footnote{$H_1(\U_{g,n})$ admits only negative weights as a $\Gm$-representation via $\omega$.} extension of $\GSp(H_\Ql)$ by a prounipotent group over $\Ql$, which we denote by $\U_{g,n}$: there is a \textcolor{black}{split} exact sequence of proalgebraic groups over $\Ql$
$$1\to \U_{g,n}\to \cG_{g,n}\to \GSp(H_\Ql)\to 1,$$
\textcolor{black}{where any two splittings are conjugate by an element of $\U_{g,n}$.}
\textcolor{black}{A key property of $\cG_{g,n}$ used in this paper is that every finite-dimensional  $\cG_{g,n}$-module admits  a natural weight filtration and so do the ind- and pro-objects in the category of $\cG_{g,n}$-modules. The construction of the natural weight filtration is explained in (\cite[\S3.1]{wei}).}
\subsection{Relative completion of $\pi_1(\M_{g,n/\bar k})$ and the $\cG_{g,n}$-modules $\u^\geom_{g,n}$}\label{relative} 
The key object in this paper is the Lie algebra of the relative completion of $\pi_1(\M_{g,n/\bar k}, \bar y)$, which is a pro-object  in the category of $\cG_{g,n}$-modules, admitting  a natural weight filtration. A detailed review and basic properties of relative completion are given in \cite{hain5}. \\
\indent Recall that  the image of the  monodromy representation $\rho^\geom_{\bar y}: \pi_1(\M_{g,n/\bar k}, \bar y)\to \Sp(H_\Ql)$ is Zariski-dense in $\Sp(H_\Ql)$. 
The relative completion of $\pi_1(\M_{g,n/\bar k}, \bar y)$ with respect to $\rho^\geom_{\bar y}$ consists of a proalgebraic group $\cG^\geom_{g,n}$ over $\Ql$ and a homomorphism 
$$\tilde{\rho}^\geom_{\bar y}:\pi_1(\M_{g,n/\bar k}, \bar y)\to \cG^\geom_{g,n}(\Ql).$$
 The proalgebraic group $\cG^\geom_{g,n}$ is an extension of $\Sp(H_\Ql)$ by a prounipotent $\Ql$-group $\U^\geom_{g,n}$, that is, there is an exact sequence of proalgebraic groups over $\Ql$:
$$1\to \U^\geom_{g,n}\to \cG^\geom_{g,n}\to \Sp(H_\Ql)\to 1.$$
Denote the Lie algebras of $\cG^\geom_{g,n}$, $\U^\geom_{g,n}$, and $\Sp(H_\Ql)$ by $\g^\geom_{g,n}, \u^\geom_{g,n}$, and $\r$, respectively. We list \textcolor{black}{the} results needed in this paper:

\begin{proposition}[{\cite[Prop.~8.4]{hain2}, \cite[Thm.~3.9  ]{wei}}] \label{comparison}With notations as above, 
the conjugation action of $\pi_1(\M_{g,n/ k}, \bar y)$ on $\pi_1(\M_{g,n/\bar k},\bar y)$ induces an adjoint action of $\cG_{g,n}$ on $\g^\geom_{g,n}$, and hence on $\u^\geom_{g,n}$. Therefore, the Lie algebras $\g^\geom_{g,n}$ and $\u^\geom_{g,n}$ admit weight filtrations $W_\bullet$ satisfying 
\begin{enumerate}
\item $\g^\geom_{g,n}=W_{0}\g^\geom_{g,n}, \,\,\u^\geom_{g,n}=W_{-1}\u^\geom_{g,n}=W_{-1}\g^\geom_{g,n}, \,\,\text{ and }\,\,\Gr^W_0\g^\geom_{g,n}=\r;$
\item the action of $\cG_{g,n}$ on $\Gr^W_\bullet\u^\geom_{g,n}$ factors through $\cG_{g,n}\to \GSp(H_\Ql)$;
\item each $\Gr^W_m\u^\geom_{g,n}$ is of weight $m$ as a $\Gm$-representation via $\omega$.

\end{enumerate}

\end{proposition}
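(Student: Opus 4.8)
The plan is to manufacture the adjoint action from functoriality together with the universal property of the two completions, and then to extract the weight filtration from the central cocharacter $\omega$ sitting inside $\cG_{g,n}$. First I would start from the homotopy exact sequence $1\to\pi_1(\M_{g,n/\bar k},\bar y)\to\pi_1(\M_{g,n/k},\bar y)\to G_k\to 1$, which exhibits $\pi_1(\M_{g,n/\bar k},\bar y)$ as a normal subgroup on which $\pi_1(\M_{g,n/k},\bar y)$ acts by conjugation. Since relative completion is functorial for automorphisms of a profinite group, this conjugation action induces an action of $\pi_1(\M_{g,n/k},\bar y)$ on $\cG^\geom_{g,n}$ by $\Ql$-group automorphisms, and hence on $\g^\geom_{g,n}$ and $\u^\geom_{g,n}$. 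The essential step—this is precisely \cite[Thm.~3.9]{wei} and \cite[Prop.~8.4]{hain2}—is to promote this to an algebraic action of $\cG_{g,n}$ itself: one checks that the resulting representation of $\pi_1(\M_{g,n/k},\bar y)$ is compatible with $\rho_{\bar y}$ on the reductive quotient and is negatively weighted, so the universal property of the weighted completion makes it factor through $\cG_{g,n}$.

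Next I would construct the filtration. As $\GSp(H_\Ql)$ is reductive, the surjection $\cG_{g,n}\to\GSp(H_\Ql)$ admits a Levi splitting, so $\omega:\Gm\to\GSp(H_\Ql)$ lifts to $\tilde\omega:\Gm\to\cG_{g,n}$; composing with the adjoint action yields a $\Gm$-action on $\g^\geom_{g,n}$, and I would set $W_m\g^\geom_{g,n}$ to be the sum of the $\omega$-weight $\le m$ eigenspaces, with $W_\bullet\u^\geom_{g,n}$ the induced filtration. This is independent of the Levi chosen: two lifts of $\omega$ are conjugate by an element of $\U_{g,n}$, which acts by weight-lowering automorphisms and therefore preserves $W_\bullet$. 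Property (3) holds by construction.

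For (1) I would use that $\omega(z)=z^{-1}\id$ is central in $\GSp(H_\Ql)$, so its adjoint action on $\r$, the Lie algebra of $\Sp(H_\Ql)$, is trivial and hence of weight $0$; together with the hypothesis that $\cG_{g,n}$ is negatively weighted—so that $\u^\geom_{g,n}$, identified with the prounipotent radical of $\cG_{g,n}$, carries only negative weights—this gives $\g^\geom_{g,n}=W_0\g^\geom_{g,n}$, $\Gr^W_0\g^\geom_{g,n}=\r$, and $\u^\geom_{g,n}=W_{-1}\u^\geom_{g,n}=W_{-1}\g^\geom_{g,n}$. For (2), since $\U_{g,n}=\ker(\cG_{g,n}\to\GSp(H_\Ql))$ acts on $\g^\geom_{g,n}$ through strictly weight-lowering derivations, it acts trivially on $\Gr^W_\bullet\u^\geom_{g,n}$, and the residual action therefore factors through $\GSp(H_\Ql)$.

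The main obstacle is the factoring in the first step: upgrading the group-theoretic conjugation action to an algebraic action of the proalgebraic group $\cG_{g,n}$ compatible with weights. This rests on the Hain--Matsumoto comparison between the weighted completion of the arithmetic $\pi_1$ and the relative completion of the geometric $\pi_1$—in particular on the identification of $\u^\geom_{g,n}$ with the prounipotent radical of $\cG_{g,n}$, which is exactly what forces the intrinsic weight filtration on $\g^\geom_{g,n}$ to agree with the one cut out by $\omega$. Once that identification is available, (1)--(3) follow formally from the reductivity of $\GSp(H_\Ql)$ and the negatively-weighted hypothesis.
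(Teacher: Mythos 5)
The paper itself gives no proof of this proposition: it is imported verbatim from \cite[Prop.~8.4]{hain2} and \cite[Thm.~3.9]{wei}, so your attempt can only be measured against those sources. Against that benchmark your outline is the right one and essentially the standard Hain--Matsumoto argument: functoriality of relative completion converts the conjugation action into an action of $\pi_1(\M_{g,n/k},\bar y)$ on $\cG^\geom_{g,n}$ by automorphisms; the universal property of weighted completion (after verifying that the induced action on $H_1(\u^\geom_{g,n})$ is compatible with $\rho_{\bar y}$ and negatively weighted) promotes this to an algebraic action of $\cG_{g,n}$; and the weight filtration is cut out by a lift $\tilde\omega$ of $\omega$ through a Levi splitting, well-defined because any two lifts are conjugate by an element of $\U_{g,n}$, whose adjoint action is unipotent and strictly lowers $\tilde\omega$-weights. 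Centrality of $\omega(z)=z^{-1}\id$ puts $\r$ in weight $0$, and triviality of the $\U_{g,n}$-action on the associated graded gives (2); (3) holds by construction.

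The one genuine error is your repeated identification of $\u^\geom_{g,n}$ with the Lie algebra of the prounipotent radical of $\cG_{g,n}$. That radical is $\U_{g,n}$, with Lie algebra $\u_{g,n}$, sitting in the weighted completion of the \emph{arithmetic} fundamental group; by contrast $\u^\geom_{g,n}$ is the Lie algebra of the prounipotent radical of the relative completion $\cG^\geom_{g,n}$ of the \emph{geometric} fundamental group, and the two are related only by a natural map $\u^\geom_{g,n}\to\u_{g,n}$, which need not be an isomorphism (its cokernel involves the weighted completion of $G_k$). Consequently the assertion $\u^\geom_{g,n}=W_{-1}\u^\geom_{g,n}$ in (1) cannot be extracted from the phrase ``$\cG_{g,n}$ is a negatively weighted extension,'' which constrains $\u_{g,n}$, not $\u^\geom_{g,n}$. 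The correct route is the one you gesture at elsewhere: $\u^\geom_{g,n}$ is pronilpotent and topologically generated by $H_1(\u^\geom_{g,n})$, which is a negatively weighted $\cG_{g,n}$-module (for $g\geq 3$ it is $\Lambda^3_n H$, pure of weight $-1$, by Johnson's theorem together with Proposition \ref{relative comp iso on H_1}), and brackets of negative-weight elements have strictly more negative weight. With that substitution your argument is complete and coincides with the proofs in the cited references.
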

The Lie algebra $\u^\geom_{g,n}$ is generated by $H_1(\u^\geom_{g,n})$,  which is determined by cohomology. More precisely, we have the following property of \textcolor{black}{the} relative completion.  
\begin{proposition}[{\cite[Thm.~3.8 ]{hain5}}]\label{relative comp iso on H_1}
	For each finite-dimensional $\Sp(H_\Ql)$-representation $V$, there is a natural isomorphism 
	$$\Hom_{\Sp(H)}(H_1(\u^\geom_{g,n}), V)\cong H^1(\pi_1(\M_{g,n/\overline{ \Q}}), V).$$
\end{proposition}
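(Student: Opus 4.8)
The plan is to reduce the stated isomorphism to two separate facts about the relative completion and then compose them. Throughout, write $R=\Sp(H_\Ql)$, $\Gamma=\pi_1(\M_{g,n/\Qbar},\bar y)$, $\cG=\cG^\geom_{g,n}$, and $\U=\U^\geom_{g,n}$ with Lie algebra $\u=\u^\geom_{g,n}$, so that there is an extension $1\to\U\to\cG\to R\to 1$ with $R$ reductive. A finite-dimensional $R$-module $V$ is viewed as a $\cG$-module through $\cG\to R$; in particular $\U$, and hence $\u$, acts trivially on $V$. The two facts I would establish are: (i) a comparison isomorphism $H^1(\cG,V)\cong H^1(\Gamma,V)$ for all such $V$; and (ii) an identification $H^1(\cG,V)\cong\Hom_{R}(H_1(\u),V)$. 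Granting both, the target $H^1(\pi_1(\M_{g,n/\Qbar}),V)=H^1(\Gamma,V)$ is carried onto $\Hom_{R}(H_1(\u),V)$, which is the assertion; naturality in $V$ will be inherited from the two constructions.

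For (ii) I would use the Hochschild--Serre spectral sequence
\[
H^p\bigl(R,\,H^q(\U,V)\bigr)\Rightarrow H^{p+q}(\cG,V)
\]
attached to the extension $1\to\U\to\cG\to R\to 1$. Since $R$ is reductive and $\Char=0$, every finite-dimensional $R$-module is semisimple, so $H^p(R,-)=0$ for $p>0$ and the spectral sequence collapses to $H^n(\cG,V)\cong H^n(\U,V)^{R}$. For the prounipotent group $\U$ one has $H^\bullet(\U,V)\cong H^\bullet(\u,V)$, algebraic group cohomology agreeing with Lie algebra cohomology via $\exp$; and because $\u$ acts trivially on $V$, degree one gives $H^1(\u,V)=\Hom(\u/[\u,\u],V)=\Hom(H_1(\u),V)$, using $H_1(\U)\cong H_1(\u)$, the usual abelianization comparison for prounipotent groups. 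Taking $R$-invariants then yields $H^1(\cG,V)\cong\Hom_{R}(H_1(\u),V)$.

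The real content is (i), which is where I would concentrate. I would read $H^1(\Gamma,V)$ as crossed homomorphisms $d\colon\Gamma\to V$ modulo principal ones. Such a $d$, paired with $\rho^\geom_{\bar y}$, assembles into a homomorphism $\Gamma\to V\rtimes R$, $\gamma\mapsto(d(\gamma),\rho^\geom_{\bar y}(\gamma))$, lifting $\rho^\geom_{\bar y}$, the cocycle identity being exactly the homomorphism condition; principal cocycles correspond to $V$-conjugation. Hence $H^1(\Gamma,V)$ is the set of lifts of $\rho^\geom_{\bar y}$ to $V\rtimes R$ modulo $V$-conjugacy. Now $V\rtimes R$ is an extension of the reductive group $R$ by the abelian prounipotent group $V$, and the composite $\Gamma\to V\rtimes R\to R$ is the Zariski-dense map $\rho^\geom_{\bar y}$; so by the universal property of the relative completion $\cG$ of $\Gamma$ with respect to $\rho^\geom_{\bar y}$, each lift factors uniquely through a morphism $\cG\to V\rtimes R$ over $R$, again up to $V$-conjugacy. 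The resulting map $H^1(\Gamma,V)\cong H^1(\cG,V)$ is the desired comparison.

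The main obstacle is making (i) precise: one must verify that the dictionary between crossed homomorphisms and lifts to $V\rtimes R$ respects the two equivalence relations, that the $V$-conjugacy ambiguity on the completion side matches coboundaries exactly (and does not introduce spurious automorphisms), and that $V\rtimes R$ genuinely lies in the class of extensions for which the universal property of relative completion is formulated. Checking that everything is functorial in $V$, so that the isomorphism is natural and compatible with the $\cG_{g,n}$-action recorded in Proposition \ref{comparison}, is then routine once the bijection is built from the universal property.
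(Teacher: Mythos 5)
The paper offers no proof of this proposition --- it is quoted directly from Hain \cite[Thm.~3.8]{hain5} --- so there is nothing internal to compare against; but your reconstruction is correct and is in fact the standard argument behind the cited result. Your step (ii) (Hochschild--Serre for $1\to\U\to\cG\to R\to 1$, collapsing by reductivity of $R$ in characteristic zero, plus $H^1(\U,V)\cong\Hom(H_1(\u),V)$ for trivial $\u$-action) and your step (i) (reading classes in $H^1(\Gamma,V)$ as lifts of $\rho^\geom_{\bar y}$ to $V\rtimes R$ modulo $V$-conjugacy and invoking the universal property of relative completion, which applies since $V\rtimes R$ is an extension of $R$ by the abelian prounipotent group $V$) are both sound; the matching of the two equivalence relations follows from the uniqueness clause in the universal property. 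The only point worth flagging is that $\Gamma=\pi_1(\M_{g,n/\Qbar},\bar y)$ is profinite, so $H^1(\Gamma,V)$ must be taken as continuous cohomology and the crossed homomorphisms $d\colon\Gamma\to V$ as continuous maps, which is exactly the setting in which the universal property of the ($\ell$-adic) relative completion is formulated; with that understood, your argument goes through as written.
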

\subsubsection{Variant} Denote by $\cG^\geom_{\cC_{g,n}}$ the relative completion  of $\pi_1(\cC_{g,n/\bar k}, \bar x)$ with respect to the composition $\pi_1(\cC_{g,n/\bar k}, \bar x)\to \pi_1(\M_{g,n/\bar k}, \bar y)\to \Sp(H_\Ql)$. It is again an extension of $\Sp(H_\Ql)$ by a prounipotent $\Ql$-group denoted by $\U^\geom_{\cC_{g,n}}$. Denote the Lie algebras of $\cG^\geom_{\cC_{g,n}}$ and $\U^\geom_{\cC_{g,n}}$ by $\g^\geom_{\cC_{g,n}}$ and $\u^\geom_{\cC_{g,n}}$, respectively.  

\subsection{Key exact sequences}Suppose that $g\geq 2$ and $n\geq 0$. 
We consider the universal families $\pi:\cC_{g,n/k}\to \M_{g,n/k}$, $\pi^o:\M_{g,n+1/k}\to\M_{g,n/k}$, and $h:\M_{g,n/k}\to \M_{g/k}$. 
Recall that we fix a geometric point $\bar y$ of $\M_{g,n/k}$ and denote by $\Pi$ the fundamental group $\pi_1(C_{\bar y}, \bar x)$, where $C_{\bar y}$ is the fiber of $\pi$ over $\bar y$ and $\bar x$ is a geometric point of $C_{\bar y}$. Similarly, we denote $\pi_1(C^o_{\bar y}, \bar x)$ by $\Pi'$. We also consider $\bar y$ as a geometric point of $\M_{g/k}$ via the morphism $h$. The fiber of $h$ over $\bar y$ is given by
$$F(C_{\bar y})_{g,n}:=C_{\bar y}^n-\bigcup_{i\not=j}\left\{x_i=x_j\right\}.$$
Let $\bar \eta$ be the geometric generic point of $F(C_{\bar y})_{g,n}$.  The fundamental group $\pi_1(F(C_{\bar y})_{g,n}, \bar \eta)$ of the fiber is isomorphic to the profinite completion of $\pi^\top_{g,n}$, the topological fundamental group of the configuration space $F_{g,n}$. Denote the fundamental group $\pi_1(F(C_{\bar y})_{g,n}, \bar \eta)$ by $\pi_{g,n}$. The $\ell$-adic unipotent completion of $\pi_{g,n}$ is isomorphic to  $\cP_{g,n}\otimes \Ql$ and thus we denote its Lie algebra by $\p_{g,n}$ as well. Denote also the $\ell$-adic unipotent completions of $\Pi$ and $\Pi'$ by $\cP$ and $\cP'$ and their Lie algebras by $\p$ and $\p'$, respectively.
The center-freeness of $\cP$, $\cP'$, and $\cP_{g,n}$ (see \cite{ntu}) together with the exactness criterion \cite[Prop.~6.11]{hain2} gives
\begin{proposition}[{\cite[Prop.~8.6 ]{hain2}}]With notations as above, the families $\pi$, $\pi^o$, and $h$ induce the exact sequences:
$$0\to\p\to\g^\geom_{\cC_{g,n}}\to\g^\geom_{g,n}\to 0;$$
$$0\to\p'\to\g^\geom_{g,n+1}\to\g^\geom_{g,n}\to 0;$$
$$0\to \p_{g,n}\to\g^\geom_{g,n}\to\g^\geom_g\to 0.$$
Consequently, these sequences remain exact after replacing $\g^\geom$ with $\u^\geom$ and after applying the functor $\Gr^W_\bullet$.
\end{proposition}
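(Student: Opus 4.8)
The plan is to derive all three Lie-algebra sequences uniformly from the homotopy exact sequences of the geometric fundamental groups, by feeding them into the exactness criterion \cite[Prop.~6.11]{hain2}. First I would record, for each of the three families, the relevant short exact sequence of profinite groups. For $\pi$ and $\pi^o$ I take $S=\M_{g,n/\bar k}$ in Proposition \ref{exact seq}, obtaining $1\to\Pi\to\pi_1(\cC_{g,n/\bar k},\bar x)\to\pi_1(\M_{g,n/\bar k},\bar y)\to 1$ and its punctured analogue with $\Pi'$; for $h$ the fiber over $\bar y$ is the configuration space $F(C_{\bar y})_{g,n}$, so the homotopy sequence reads $1\to\pi_{g,n}\to\pi_1(\M_{g,n/\bar k},\bar y)\to\pi_1(\M_{g/\bar k},\bar y)\to 1$. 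In each case the symplectic representation factors through the base, so its restriction to the fiber subgroup is trivial; hence the relative completion of the fiber is nothing but its $\ell$-adic unipotent completion, i.e.\ $\cP$, $\cP'$, $\cP_{g,n}$, with Lie algebras $\p$, $\p'$, $\p_{g,n}$.

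The main step is then to apply the relative-completion exactness criterion \cite[Prop.~6.11]{hain2} to each sequence. Right-exactness---surjectivity of $\g^\geom_{\cC_{g,n}}\to\g^\geom_{g,n}$, of $\g^\geom_{g,n+1}\to\g^\geom_{g,n}$, and of $\g^\geom_{g,n}\to\g^\geom_g$---is the soft part, coming from the surjectivity of the corresponding maps of fundamental groups and the functoriality of completion. The delicate input is injectivity of $\p\to\g^\geom_{\cC_{g,n}}$ (and its two analogues): the criterion guarantees this exactly when the unipotent completion of the fiber is center-free, and that center-freeness of $\cP$, $\cP'$, and $\cP_{g,n}$ is supplied by \cite{ntu}. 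I expect this verification---that the hypotheses of \cite[Prop.~6.11]{hain2} hold, with center-freeness compensating for the potential failure of left-exactness of the completion functor---to be the crux of the argument; everything else is formal.

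Finally, for the ``consequently'' clause I would argue by diagram chase. By Proposition \ref{comparison} each $\g^\geom$ sits in an extension $0\to\u^\geom\to\g^\geom\to\r\to 0$, where $\r=\Gr^W_0\g^\geom$, and the maps in the three exact sequences commute with the projections onto the common $\r$ (this compatibility is built into the definition of $\cG^\geom_{\cC_{g,n}}$ via the composite to $\Sp(H_\Ql)$). Since $\p$, $\p'$, $\p_{g,n}$ are pronilpotent they land in the kernels $\u^\geom$, so comparing the given sequence with the split sequence $0\to 0\to\r\xrightarrow{\ \mathrm{id}\ }\r\to 0$ and applying the snake lemma yields the exact sequences with $\g^\geom$ replaced by $\u^\geom$. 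For the graded versions I would note that all the maps strictly respect the weight filtrations $W_\bullet$---they are morphisms of weight-filtered $\GSp(H_\Ql)$-modules, hence strict---so that the exact functor $\Gr^W_\bullet$ carries each short exact sequence to a short exact sequence, giving the claim.
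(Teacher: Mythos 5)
Your proposal matches the paper's argument: the paper derives all three sequences in one stroke by citing the center-freeness of $\cP$, $\cP'$, and $\cP_{g,n}$ from \cite{ntu} together with the exactness criterion \cite[Prop.~6.11]{hain2}, exactly as you do. Your additional elaboration of the ``consequently'' clause (pronilpotent fibers landing in $\u^\geom$, strictness of the weight filtration making $\Gr^W_\bullet$ exact) is consistent with what the paper leaves implicit.
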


\subsubsection{Weights} 
Recall that   $H_\Ql=H^1_\et(C_{\bar y} ,\Ql(1))$ \textcolor{black}{is} equipped with the cup product $\theta:\Lambda^2H_\Ql\to \Ql(1)$ and that we defined a central cocharacter $\omega:\Gm\to\GSp(H_\Ql)$ by setting $z\mapsto z^{-1}\id_H$. With respect to  $\omega$, each finite-dimensional irreducible $\GSp(H_\Ql)$-representation $V$ admits \textcolor{black}{a weight} $w(V)$. In particular, the representation $H_\Ql$ is of weight $-1$ and the representation $\Ql(r)$ is of weight $-2r$. \textcolor{black}{Then, the} finite-dimensional representation $V(r)$ has weight $w(V)-2r$. 

\subsubsection{Key representations}  Unless stated otherwise, for simplicity, we set $H=H_\Ql$. Here, we introduce $\GSp(H)$-representations that play an important role  in this paper. Firstly, the dual $\thetadual$ of $\theta$ can be viewed as a $\GSp(H_\Ql)$-equivariant map $\thetadual: \Ql(1)\to \Lambda^2H$ or equivalently a vector in $(\Lambda^2H)(-1)$. The key representations  appear in $\Gr^W_{m}\u^\geom_{g,n}$ for $m=-1,-2$. In weight $-1$, we consider the representations $H$ and $(\Lambda^3H)(-1)/(H\wedge \thetadual)$. Adopting Hain's notation from \cite{hain2}, we denote the representation $(\Lambda^3H)(-1)/H\wedge \thetadual$ by $\Lambda^3_0 H$. In weight $-2$, the main representations are $\Ql(1)$ and $\Lambda^2H/(\im \thetadual)$, denoted by $\Lambda^2_0H$. 

\subsection{The Lie algebras $\d_{g,n}$ and $\d_{\cC_{g,n}}$} \textcolor{black}{Before  introducing the description of the two-step nilpotent Lie algebras $\d_{g,n}$ and $\d_{\cC_{g,n}}$ in  \S\ref{two step lie}, we  review the $\GSp(H)$-modules $\Gr^W_m\u^\geom_{g,n}$ for $m=-1, -2$.} We will make a slight modification to the original definition of $\d_{g,n}$ made by Hain in \cite{hain2} and discuss a key result on the sections of the Lie algebra surjection $\d_{\cC_{g,n}}\to \d_{g,n}$ induced by $\pi:\cC_{g,n/k}\to\M_{g,n/k}$. 
\subsubsection{The $\GSp(H)$-modules $\p$, $\p'$, and $\p_{g,n}$} \textcolor{black}{Denote the pro-$\ell$ completion of $\Pi$ and $\Pi'$ by $\Pi^{(\ell)}$ and $\Pi'^{(\ell)}$, respectively.}  We note that the conjugation action of $\pi_1(\cC_{g,n/k}, \bar y)$ on $\Pi^\prol$ (resp. $\pi_1(\M_{g,n+1/k}, \bar y)$  on  $\Pi'^\prol$) induces an action of $\cG_{\cC_{g,n}}$ on $\p$ (resp. $\cG_{g,n+1}$ on $\p'$) and hence a weight filtration on $\p$ (resp. $\p'$). A key to understanding the $\GSp(H)$-module structure of $\Gr^W_\bullet\u^\geom_{g,n}$ is that of $\Gr^W_\bullet\p$ and $\Gr^W_\bullet\p'$.  \textcolor{black}{It follows directly from the main result of Labute \cite{lab} } that there is a minimal presentation 
$$\Gr^L_\bullet\p\cong \L(H)/(\im \thetadual),$$
where $L^\bullet\p$ is the lower central series of $\p$ with $L^1\p=\p$. 
\begin{proposition}
	With notations as above, the weight filtration  of $\p$ agrees with its lower central series, that is, for each $r\geq 1$,	$W_{-r}\p=L^r\p$. 
\end{proposition}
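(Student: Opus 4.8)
The plan is to prove the two inclusions $L^r\p\subseteq W_{-r}\p$ and $W_{-r}\p\subseteq L^r\p$ separately. Throughout I will use two structural facts about the weight filtration $W_\bullet$ on $\p$ coming from the $\cG_{\cC_{g,n}}$-action. First, the Lie bracket $\p\otimes\p\to\p$ is $\cG_{\cC_{g,n}}$-equivariant, so it is filtered, $[W_a\p, W_b\p]\subseteq W_{a+b}\p$. Second, as in Proposition \ref{comparison}, each $\Gr^W_m\p$ carries a well-defined weight $m$ via $\omega$, so $W_\bullet$ is the weight filtration of an honest (Galois) weight structure; in particular the induced filtration on any subquotient is again a weight filtration, and a pure object of weight $w$ satisfies $W_m=0$ for $m<w$ and $W_m=(\,\cdot\,)$ for $m\geq w$.

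For the inclusion $L^r\p\subseteq W_{-r}\p$ I would first establish $\p=W_{-1}\p$. The abelianization $H_1(\p)=\Gr^L_1\p$ is isomorphic to $H$, which is pure of weight $-1$; since $\p$ is topologically generated by $H_1(\p)$ and the bracket is weight-additive, no graded quotient of nonnegative weight can occur, i.e. $\Gr^W_m\p=0$ for $m\geq 0$. The inclusion then follows by induction on $r$: we have $L^1\p=\p=W_{-1}\p$, and $L^{r+1}\p=[\p,L^r\p]\subseteq[W_{-1}\p,W_{-r}\p]\subseteq W_{-r-1}\p$.

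The reverse inclusion rests on the purity statement that I would isolate as the key lemma: for every $r\geq 1$ the graded piece $\Gr^L_r\p$ is pure of weight $-r$. To prove it I would feed the minimal presentation $\Gr^L_\bullet\p\cong\L(H)/(\im\thetadual)$ through the weight bookkeeping. The generators sit in degree $1$ and are pure of weight $-1$, so $r$-fold brackets have weight $-r$ and the degree-$r$ part $\L_r(H)$ of the free Lie algebra is pure of weight $-r$. The relation ideal is generated in degree $2$ by $\im\thetadual$, and $\thetadual$, viewed as a map $\Ql(1)\to\Lambda^2H$, is a morphism between objects of weight $-2$; hence the ideal is homogeneous, its degree-$r$ component is pure of weight $-r$, and so is the quotient $\Gr^L_r\p$. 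Granting this, I get $W_{-r}\p\subseteq L^r\p$ by a short induction on $s$: trivially $W_{-r}\p\subseteq L^1\p$, and if $W_{-r}\p\subseteq L^s\p$ with $s<r$, then the image of $W_{-r}\p$ in $\Gr^L_s\p$ lies in $W_{-r}\Gr^L_s\p=0$ (as $\Gr^L_s\p$ is pure of weight $-s>-r$), whence $W_{-r}\p\subseteq L^{s+1}\p$; iterating up to $s=r-1$ gives $W_{-r}\p\subseteq L^r\p$.

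The main obstacle is the key lemma, and more precisely the claim that the filtration induced by $W_\bullet$ on $\Gr^L_\bullet\p$ coincides with the pure grading read off from the presentation $\L(H)/(\im\thetadual)$. This amounts to verifying that the isomorphism of \cite{DGMS} is compatible with the weight structure carried by the $\cG_{\cC_{g,n}}$-module $\p$, i.e. that the generators genuinely occupy weight $-1$ and the cup-product relation $\thetadual$ occupies weight $-2$ in a manner matching that module structure. Since \cite{DGMS} is formulated Hodge-theoretically, I would need to transport it to the $\ell$-adic weighted-completion setting, using that $H=H^1_\et(C_{\bar y},\Ql(1))$ and $\thetadual$ are Galois-equivariant of the stated weights; this compatibility is exactly what forces the weight and lower-central-series filtrations to agree.
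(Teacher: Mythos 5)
Your overall skeleton is correct and matches the paper's: both directions are handled the same way, with the inclusion $L^r\p\subseteq W_{-r}\p$ coming from weight-additivity of the bracket plus $\p=W_{-1}\p$, and the reverse inclusion coming from the purity of $\Gr^L_r\p$ in weight $-r$ together with a descending induction (the paper phrases this via strictness/exactness of $V\mapsto W_mV$ and the resulting identity $W_{-r-1}L^{r+1}\p=W_{-r-1}L^r\p$; your version, pushing $W_{-r}\p$ into successive graded quotients where it must vanish, is an equivalent and slightly cleaner packaging). The one genuine divergence is your proof of the key purity lemma, and here you have made the problem harder than it is. You derive purity of $\Gr^L_r\p$ from the minimal presentation $\L(H)/(\im\thetadual)$ of \cite{DGMS} and then flag, as the ``main obstacle,'' the need to check that this isomorphism is compatible with the $\ell$-adic weight structure. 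That verification is dispensable: the paper simply observes that the bracket induces a $\cG_{\cC_{g,n}}$-equivariant surjection $\p^{\otimes r}\to L^r\p$, hence a surjection $(\Gr^L_1\p)^{\otimes r}=H^{\otimes r}\to\Gr^L_r\p$, and a quotient of the pure weight~$-r$ object $H^{\otimes r}$ is pure of weight $-r$ (the weight filtration splits canonically via $\omega$, so subquotients of pure objects are pure). No information about the relations, and hence no appeal to \cite{DGMS} or to any Hodge-to-$\ell$-adic comparison, is needed for purity; the presentation only becomes relevant later, when one needs the actual bracket relations in $\Gr^W_{-2}$. So your argument is correct once the compatibility you flag is checked (and it does hold, since the presentation is generated in degree one by $H_1(\p)=H$ and everything in sight is equivariant), but the simpler route removes your main obstacle entirely.
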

\begin{proof}
	Note that the map $\p^{\otimes r}\to L^r\p$ induced by the bracket is surjective. Thus we have $L^r\p\subset W_{-r}\p$ and furthermore, since $H=H_1(\p)$ is pure of weight -1,  $\Gr^L_r\p$ is pure of weight $-r$.  By strictness, the functor $V\mapsto W_mV$  is exact on the category of finite-dimensional $\cG_{g,n}$-modules and hence there is the exact sequence
	$$0\to W_{-r-1}L^{r+1}\p\to W_{-r-1}L^{r}\p\to W_{-r-1}\Gr^L_{r}\p\to 0.$$
	Since $\Gr^L_{r}\p$ is pure of weight $-r$, we see that $W_{-r-1}L^{r+1}\p = W_{-r-1}L^{r}\p$. We have $W_{-2}L^2\p=L^2\p=W_{-2}\p$ and then inductively, we have our claim. 	
\end{proof}
\textcolor{black}{\begin{remark} Note that a similar statement holds for any pronilpotent Lie algebra $\n$ with pure $H_1(\n)$. 
\end{remark}}
Therefore, we have the following description of $\Gr^W_r\p$ for $r=-1,-2$:
$$\Gr^W_{-1}\p\cong H\hspace{.7in} \Gr^W_{-2}\p\cong \Lambda^2H/(\im \thetadual) = \Lambda^2_0H$$
The open immersion $j:C^o_{\bar y}\to C_{\bar y}$ induces a surjection  $\Pi'\to\Pi$, which, then, induces a  Lie algebra surjection $\p'\to \p$ that preserves weight filtrations. The Lie algebra surjection of the associated graded Lie algebras $\Gr^W_\bullet dj_\ast: \Gr^W_\bullet\p'\to \Gr^W_\bullet\p$ is an isomorphism in weight $-1$ and we have  $\Gr^W_{-2}\ker dj_\ast =\Ql(1)^{\oplus n}$:
$$\Gr^W_{-1}\p'\cong H \hspace{.7in}\Gr^W_{-2}\p'\cong \Lambda^2_0H\oplus\Ql(1)^{\oplus n}$$
Similarly, the conjugation action  of $\pi_1(\M_{g,n/k}, \bar y)$ on $\pi_{g,n}$ induces an action of $\cG_{g,n}$ on $\p_{g,n}$. Hence, the Lie algebra $\p_{g,n}$ admits a \textcolor{black}{natural} weight filtration \textcolor{black}{as a $\cG_{g,n}$-module}. By Proposition \ref{abelianization pure weight -1}, there is an isomorphism 
$$H_1(\p_{g,n})\cong \bigoplus_{j=1}^nH_j,$$ 
on which $\cG_{g,n}$ acts diagonally via $\GSp(H)$, and thus $H_1(\p_{g,n})$ is pure of weight $-1$. Therefore, the weight filtration again agrees with its lower central series, which allows us to use Theorem \ref{presentation for configuration space} to understand the bracket  of the associated graded Lie algebra $\Gr^W_\bullet\p_{g,n}$.
\subsubsection{The $\GSp(H)$-modules $\Gr^W_m\u^\geom_{g,n}$ for $m=-1,-2$} Here we assume that $g\geq 3$. The case $g=2$ will be treated in a forthcoming paper. \textcolor{black}{The functor $\Gr^W_\bullet$ on the category of $\cG_{g,n}$-modules is exact (see \cite[Thms. 3.9 and 3.12]{wei}). Hence,   we }can determine the $\GSp(H)$-module structure of each $\Gr^W_m\u^\geom_{g,n}$ by considering the exact sequences
$$0\to \Gr^W_\bullet\p'\to \Gr^W_\bullet\u^\geom_{g,n+1}\to \Gr^W_\bullet\u^\geom_{g,n}\to 0.$$ In this paper, we will only need to know $\Gr^W_m\u^\geom_{g,n}$ for $m=-1$ and $-2$ for our purposes. For $m=-1$, there are isomorphisms of $\GSp(H)$-modules
$$\Gr^W_{-1}\u^\geom_{g,n}\cong \Gr^W_{-1}H_1(\u^\geom_{g,n})= H_1(\u^\geom_{g,n})\cong \Lambda^3_n H:=\Lambda^3_0H\oplus\bigoplus_{j=1}^n H_j.$$
Since $\Gr^W_\bullet\u^\geom_{g,n}$ is a graded Lie algebra with negative weights, there is a minimal presentation
$$\L(\Lambda^3_n H)/R\cong \Gr^W_\bullet\u^\geom_{g,n},$$
where $R$ is a Lie ideal generated in weight $-2$ and below. 
  Since $H_1(\u^\geom_{g,n})$ is pure of weight $-1$, the weight filtration on $\u^\geom_{g,n}$ agrees with its lower central series.  As to $\Gr^W_{-2}\u^\geom_{g,n}$, it comes down to understanding the bracket  $[~,~]:\Lambda^2\Gr^W_{-1}\u^\geom_{g, 1}=\Lambda^2\Lambda^3_1H\to \Gr^W_{-2}\u^\geom_{g,1}$, which was computed by Hain in \cite{hain0}. In summary, we have
\begin{proposition}[{\cite[Thm.~9.11]{hain2}}] \label{structure of u} Suppose that $g\geq 3$ and $n\geq 0$.  There are isomorphisms
\begin{enumerate}
\item $$\Gr^W_m\u^\geom_{g,n}\cong \begin{cases}
                                                       H_1(\u^\geom_{g,n})\cong\Lambda^3_n H=\Lambda^3_0H\oplus \bigoplus_{j=1}^nH_j& \text{ for } m=-1;\\
                                                        V_{[2+2]} (-1)\oplus \bigoplus_{j=1}^n\Lambda^2_0H_j\oplus \Ql(1)^{\oplus \binom{n}{2}} &\text{ for } m=-2;
                                                       
                                                      \end{cases}$$
\item $$\Gr^W_m\u^\geom_{\cC_{g,n}}\cong \begin{cases}
                                                       \Lambda^3_{n+1} H=\Lambda^3_0H\oplus \bigoplus_{j=0}^nH_j& \text{ for } m=-1;\\
                                                        V_{[2+2]} (-1)\oplus \bigoplus_{j=0}^{n}\Lambda^2_0H_j\oplus  \Ql(1)^{\oplus \binom{n}{2}} &\text{ for } m=-2,
                                                       
                                                      \end{cases}$$
where $V_{[2+2]}$ is the isomorphism class of $\Sp(H)$-representation corresponding to the partition $2+2$. 
\end{enumerate}
\end{proposition}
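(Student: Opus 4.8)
The plan is to establish statement (i) by induction on $n$ and then derive statement (ii) in a single step, throughout exploiting that $\Sp(H_{\Ql})$ is reductive, so that every short exact sequence of finite-dimensional $\Sp(H)$-modules splits. The engine of the induction is the exact sequence attached to the universal punctured curve $\pi^o$: after replacing $\g^\geom$ by $\u^\geom$ and applying $\Gr^W_\bullet$ it reads $0\to\Gr^W_\bullet\p'\to\Gr^W_\bullet\u^\geom_{g,n+1}\to\Gr^W_\bullet\u^\geom_{g,n}\to 0$. Since the $\cG_{g,n}$-action on $\Gr^W_\bullet\u^\geom$ factors through $\GSp(H)$ by Proposition~\ref{comparison}(ii) and the weight of an irreducible constituent determines its Tate twist, it suffices to identify the underlying $\Sp(H)$-modules in each weight; the $\GSp(H)$-structure, i.e. the correct powers of $\Ql(1)$, is then forced.

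In weight $-1$ one has $\Gr^W_{-1}\u^\geom_{g,n}=H_1(\u^\geom_{g,n})$ because $\u^\geom_{g,n}$ is generated in weight $-1$. For the base case $n=0$ (here $g\geq 3$ is used), Johnson's computation of the abelianization of the Torelli group, read through Proposition~\ref{relative comp iso on H_1}, gives $H_1(\u^\geom_g)\cong\Lambda^3_0H$. For the inductive step, take $\Gr^W_{-1}$ of the exact sequence above and recall $\Gr^W_{-1}\p'\cong H$; splitting the resulting sequence of $\Sp(H)$-modules and labelling the new copy of $H$ as $H_{n+1}$ gives $\Gr^W_{-1}\u^\geom_{g,n+1}\cong\Lambda^3_0H\oplus\bigoplus_{j=1}^{n+1}H_j=\Lambda^3_{n+1}H$, as claimed.

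In weight $-2$ the same mechanism applies, but the base case is the crux and is where I expect the real difficulty to lie. Because $\Gr^W_\bullet\u^\geom$ is generated in weight $-1$, the module $\Gr^W_{-2}\u^\geom$ is the image of the bracket $\Lambda^2\Gr^W_{-1}\u^\geom\to\Gr^W_{-2}\u^\geom$, so identifying it is equivalent to controlling the quadratic relations in the minimal presentation. For $n=0$ and $g\geq 3$ this is exactly Hain's determination of the degree-two part of the Torelli Lie algebra in \cite{hain0}: the relations cut $\Lambda^2\Lambda^3_0H$ down to a single irreducible summand, yielding $\Gr^W_{-2}\u^\geom_g\cong V_{[2+2]}(-1)$. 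Granting this, apply $\Gr^W_{-2}$ to the exact sequence, use $\Gr^W_{-2}\p'\cong\Lambda^2_0H\oplus\Ql(1)^{\oplus n}$, split, and label the new $\Lambda^2_0H$ as $\Lambda^2_0H_{n+1}$ to obtain $\Gr^W_{-2}\u^\geom_{g,n+1}\cong V_{[2+2]}(-1)\oplus\bigoplus_{j=1}^{n+1}\Lambda^2_0H_j\oplus\Ql(1)^{\oplus(\binom{n}{2}+n)}$; the identity $\binom{n}{2}+n=\binom{n+1}{2}$ completes the induction.

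Finally, statement (ii) requires no separate induction. Apply $\Gr^W_\bullet$ to the exact sequence of the universal curve $\pi$, namely $0\to\Gr^W_\bullet\p\to\Gr^W_\bullet\u^\geom_{\cC_{g,n}}\to\Gr^W_\bullet\u^\geom_{g,n}\to 0$, where now $\Gr^W_{-1}\p\cong H$ and $\Gr^W_{-2}\p\cong\Lambda^2_0H$, the crucial difference being that the proper fiber $C_{\bar y}$ contributes no $\Ql(1)$ summand. Splitting against the structure of $\u^\geom_{g,n}$ established above and recording the new copies with index $j=0$ gives $\Gr^W_{-1}\u^\geom_{\cC_{g,n}}\cong\Lambda^3_{n+1}H$ and $\Gr^W_{-2}\u^\geom_{\cC_{g,n}}\cong V_{[2+2]}(-1)\oplus\bigoplus_{j=0}^n\Lambda^2_0H_j\oplus\Ql(1)^{\oplus\binom{n}{2}}$; the binomial coefficient is unchanged precisely because $\p$ supplies no new $\Ql(1)$. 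Thus the main obstacle is isolated entirely in the weight $-2$, $n=0$ computation of \cite{hain0}, while the passage to general $n$ and to $\cC_{g,n}$ is a formal consequence of the splitting principle together with the known structure of $\Gr^W_\bullet\p$ and $\Gr^W_\bullet\p'$.
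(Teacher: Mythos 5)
Your proposal is correct and takes essentially the same route as the paper, which does not prove this proposition but cites it from Hain (\cite[Thm.~9.11]{hain2}) and sketches exactly your argument in the surrounding text: weight $-1$ from Johnson's theorem read through Proposition~\ref{relative comp iso on H_1}, weight $-2$ from Hain's bracket computation in \cite{hain0}, and the exact sequences for $\pi^o$ and $\pi$ (split using reductivity of $\Sp(H)$) to propagate to all $n$ and to $\cC_{g,n}$.
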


\subsubsection{The definitions of $\d_{g,n}$ and $\d_{\cC_{g,n}}$ and the sections of $\d_{\cC_{g,n}}\to \d_{g,n}$}{\label{two step lie}} We set 
$$\d_{g,n}:=\Gr^W_\bullet(\u^\geom_{g,n}/W_{-3}+V_{[2+2]}(-1)),$$
and 
$$\d_{\cC_{g,n}}:=\Gr^W_\bullet(\u^\geom_{\cC_{g,n}}/W_{-3}+V_{[2+2]}(-1)).$$
Note that our definition of $\d_{g,n}$ slightly differs from Hain's in \cite[\S 10]{hain2}. We make this definition due to a key role of the $\Sp(H)$ trivial representation $\Ql(1)$, dropping the representation $V_{[2+2]}(-1)$
 in this paper.  We have the following description of $\d_{g,n}$ and $\d_{\cC_{g,n}}$:
$$\Gr^W_{-1}\d_{g,n}\cong\Lambda^3_nH, \,\,\,\Gr^W_{-2}\d_{g,n}\cong \bigoplus_{j=1}^n\Lambda^2_0H_j\oplus  \Ql(1)^{\oplus \binom{n}{2}},\text{ and }\Gr^W_m\d_{g,n}=0 \text{ for }m\leq -3 ;$$
$$\Gr^W_{-1}\d_{\cC_{g,n}}\cong\Lambda^3_{n+1}H, \,\,\,\Gr^W_{-2}\d_{\cC_{g,n}}\cong \bigoplus_{j=0}^{n}\Lambda^2_0H_j\oplus  \Ql(1)^{\oplus \binom{n}{2}},\text{ and }\Gr^W_m\d_{\cC_{g,n}}=0 \text{ for }m\leq -3 .$$
We observe that the open immersion $\M_{g,n+1/k}\to\cC_{g,n/k}$ induces a $\GSp(H)$-equivariant graded Lie algebra surjection
$\d_{g,n+1}\to\d_{\cC_{g,n}}$ that makes the diagram 
$$\xymatrix@C=1em@R=1em{
\d_{g,n+1}\ar[r]\ar[d]^{\gamma^o_n}&\d_{\cC_{g,n}}\ar[d]^{\gamma_n}\\
\d_{g,n}\ar@{=}[r]                       &\d_{g,n}
}
$$
commutes,
where the left and right vertical maps are $\GSp(H)$-equivariant Lie algebra surjections induced by $\pi^o$ and $\pi$, respectively. Denote the projection $\d_{g,n+1}\to\d_{g,n}$ and $\d_{\cC_{g,n}}\to\d_{g,n}$ by $\gamma_n^o$ and $\gamma_n$, respectively.
The $\GSp(H)$-module structures of $\d_{\cC_{g,n}}$ and $\d_{g,n}$ together with the bracket computation $[~,~]:\Lambda^2\Gr^W_{-1}\d_{g,n}\to \Gr^W_{-2}\d_{g,n}$ (\cite[Prop.~9.13]{hain2}) give us 
\begin{proposition}[{\cite[Prop.~10.8]{hain2}}]\label{sections of dgn}
With notations as above, if $g\geq 4$, there are exactly $n$ $\GSp(H)$-equivariant Lie algebra sections of $\gamma_n$ given on $\Gr^W_{-1}$ by
$$s_j: \Lambda^3_0H\oplus H_1\oplus\cdots\oplus H_n\to \Lambda^3_0H\oplus H_0\oplus H_1\oplus\cdots\oplus H_n$$
$$(v; u_1,\ldots,u_n)\mapsto (v; u_j,u_1,\ldots,u_n),$$
for $j=1,\ldots,n$, where $v\in \Lambda^3_0H$ and $u_i\in H_i$ for each $i$. In particular, there is no section of $\gamma_0$. 
\end{proposition}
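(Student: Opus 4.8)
The plan is to reduce the classification of sections to a finite linear-algebra problem governed by the bracket $[\,\cdot\,,\cdot\,]:\Lambda^2\Gr^W_{-1}\to\Gr^W_{-2}$, and then to read off the constraints from Hain's bracket computation \cite[Prop.~9.13]{hain2}. Since $\d_{g,n}$ and $\d_{\cC_{g,n}}$ are two-step nilpotent graded Lie algebras generated in weight $-1$, a $\GSp(H)$-equivariant Lie algebra section $s$ of $\gamma_n$ is the same datum as a pair of $\GSp(H)$-equivariant maps $s_{-1}$ on $\Gr^W_{-1}$ and $s_{-2}$ on $\Gr^W_{-2}$, each splitting $\gamma_n$ in its weight, subject to the single compatibility $s_{-2}([x,y])=[s_{-1}x,s_{-1}y]$ for $x,y\in\Gr^W_{-1}\d_{g,n}$. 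First I would classify the two splittings using Schur's lemma. In weight $-1$ the summands $\Lambda^3_0H$ and $H$ are non-isomorphic irreducible $\GSp(H)$-modules (they carry distinct $\Sp(H)$-types), while the copies $H_0,\dots,H_n$ are mutually isomorphic; hence $s_{-1}$ must be the identity on $\Lambda^3_0H$ and of the form $s_{-1}(u^{(j)})=u^{(j)}+c_j\,u^{(0)}$ on each $H_j$, for scalars $c_j\in\Ql$ with $j=1,\dots,n$. Likewise, since $\Lambda^2_0H$ and $\Ql(1)$ are distinct $\GSp(H)$-irreducibles, $s_{-2}$ is the identity on the $\Ql(1)^{\oplus\binom{n}{2}}$ summand and is given on each $\Lambda^2_0H_j$ by $\omega^{(j)}\mapsto\omega^{(j)}+d_j\,\omega^{(0)}$. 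A candidate section is thus parametrized by $(c_1,\dots,c_n;d_1,\dots,d_n)$.

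Next I would import the relevant part of \cite[Prop.~9.13]{hain2}, which for $g\geq 4$ yields bracket formulas into $\Gr^W_{-2}$ of the following shape: a nonzero $\GSp(H)$-map $\beta:\Lambda^2\Lambda^3_0H\to\Lambda^2_0H$, which by the $S_n$-symmetry among the marked points is realized diagonally as $[v,w]=\lambda\sum_{j}\beta(v\wedge w)^{(j)}$ plus a $\Ql(1)$-term, the sum running over $j\geq 1$ in $\d_{g,n}$ and over $j\geq 0$ in $\d_{\cC_{g,n}}$; a nonzero contraction $\kappa:\Lambda^3_0H\otimes H\to\Lambda^2_0H$ with $[v,u^{(j)}]=\kappa(v\otimes u)^{(j)}$; the off-diagonal brackets $[u^{(i)},w^{(j)}]=\tfrac{(u,w)}{g}\Theta_{ij}$ for distinct $i,j\geq 1$; and the diagonal bracket $[u^{(j)},w^{(j)}]=(u\wedge w)_0^{(j)}+\tfrac{(u,w)}{g}\Theta_j$, where $(u\wedge w)_0$ denotes the class of $u\wedge w$ in $\Lambda^2_0H$. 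The essential geometric input is that, under the surjection $\d_{g,n+1}\to\d_{\cC_{g,n}}$, the $n$ classes $\Theta_{i,n+1}$ die, so that $[u^{(0)},w^{(j)}]=0$ in $\d_{\cC_{g,n}}$ for all $j\geq 1$: filling in a puncture destroys its linking with the remaining marked points. The hypothesis $g\geq 4$ enters exactly here, to guarantee that $\beta$ and $\kappa$ are nonzero.

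With these formulas in hand, evaluating $s_{-2}([x,y])=[s_{-1}x,s_{-1}y]$ on the four relevant types of generator pairs produces a scalar system. The pair $(\Lambda^3_0H,H_j)$ forces $d_j=c_j$ (using $\kappa\neq 0$); the pair $(H_i,H_j)$ with $i\neq j$ forces $c_ic_j=0$, since the cross term $c_ic_j[u^{(0)},w^{(0)}]$ contributes a spurious $\Lambda^2_0H_0$-component that must vanish; the pair $(H_j,H_j)$ forces $c_j^2=c_j$ by the same mechanism; and the pair $(\Lambda^3_0H,\Lambda^3_0H)$ forces $\sum_{j=1}^n d_j=1$, since matching the $\Lambda^2_0H_0$-component of $\lambda\sum_{j\geq0}\beta(v\wedge w)^{(j)}$ gives $\lambda(\sum_j d_j)=\lambda$ with $\lambda\neq 0$. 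Combining $d_j=c_j$, $c_j^2=c_j$, $c_ic_j=0$, and $\sum_j c_j=1$ forces exactly one $c_k$ to equal $1$ and all others to vanish, which is precisely the section $s_k$ of the statement. When $n=0$ the last equation reads $0=1$, so no section exists; concretely, the obstruction is that $[\Lambda^3_0H,\Lambda^3_0H]$ has a nonzero component in $\Lambda^2_0H_0=\Gr^W_{-2}\d_{\cC_{g,0}}$, which no $\GSp(H)$-equivariant lift of the identity on $\Lambda^3_0H$ can absorb.

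I expect the main obstacle to be the bracket computation itself — isolating which components of $[\,\cdot\,,\cdot\,]$ are nonzero and verifying the diagonal, $S_n$-symmetric form of the $\beta$- and $\kappa$-terms together with the vanishing of $[H_0,H_j]$. Once \cite[Prop.~9.13]{hain2} is in place and the weight grading together with Schur's lemma have trivialized the shape of $s_{-1}$ and $s_{-2}$, the remaining deduction is the elementary system above; the real content, and the source of the bound $g\geq 4$, is the nonvanishing of $\beta$, that is, the assertion that $[\Lambda^3_0H,\Lambda^3_0H]$ genuinely meets the $\Lambda^2_0H$-isotypic piece of $\Gr^W_{-2}$.
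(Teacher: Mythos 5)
Your proposal is correct and follows essentially the same route the paper indicates for this result (which it quotes from Hain): Schur's lemma applied to the $\GSp(H)$-module decompositions of $\Gr^W_{-1}$ and $\Gr^W_{-2}$ to reduce a section to the scalars $(c_j,d_j)$, the bracket formulas of \cite[Prop.~9.13]{hain2} together with the vanishing of $\Theta_{0j}$ in $\d_{\cC_{g,n}}$ to extract the scalar system, and the nontriviality of $[\,\cdot\,,\cdot\,]:\Lambda^2\Lambda^3_0H\to\Lambda^2_0H$ for $g\geq 4$ as the decisive input --- exactly the ``key fact'' singled out in the Remark following the Proposition. The resulting equations $d_j=c_j$, $c_ic_j=0$, $c_j^2=c_j$, $\sum_j d_j=1$ correctly yield the $n$ sections $s_j$ and the nonexistence for $n=0$.
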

\begin{remark}A key fact used in the proof of this result is that for $g\geq 4$, the bracket $[~,~]:\Lambda^2\Lambda^3_0H\to \Lambda^2_0H$ is nontrivial. 
For the case $g=3$, the representation $\Lambda^2\Lambda^3_0H$ does not contain a copy of $\Lambda^2_0H$ and for $g=2$, the representation $\Lambda^3_0H$ is trivial. For this reason, we will need to consider weights down to $-4$ and the bracket $[~,~]:\Lambda^2V_{[2+2]}(-1)\to \Gr^W_{-4}\p$, which is discussed in \cite{wat2}.
\end{remark}
The following result is an analogue of Proposition \ref{sections of dgn} for the projection $\d_{g,n+1}\to\d_{g,n}$ and essential to the proof of Theorem 1.
\begin{proposition}\label{no sections of dgn lie algebra projections}
With notations as above, if $g\geq 4$, the projection $\gamma^o_n:\d_{g,n+1}\to\d_{g,n}$ does not admit any $\GSp(H)$-equivariant Lie algebra section.
\end{proposition}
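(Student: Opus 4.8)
The plan is to argue by contradiction, combining the classification of sections of $\gamma_n$ in Proposition~\ref{sections of dgn} with the quadratic relations of $\Gr^L_\bullet\p_{g,n}$ from Theorem~\ref{presentation for configuration space}. Write $q:\d_{g,n+1}\to\d_{\cC_{g,n}}$ for the $\GSp(H)$-equivariant surjection induced by the open immersion $\M_{g,n+1/k}\to\cC_{g,n/k}$, so that the commuting square above reads $\gamma_n\circ q=\gamma^o_n$. Suppose $\sigma:\d_{g,n}\to\d_{g,n+1}$ is a $\GSp(H)$-equivariant Lie algebra section of $\gamma^o_n$. Then $q\circ\sigma$ is such a section of $\gamma_n$, since $\gamma_n\circ(q\circ\sigma)=\gamma^o_n\circ\sigma=\id$. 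If $n=0$ this already contradicts Proposition~\ref{sections of dgn}, which asserts that $\gamma_0$ has no section; so assume $n\geq 1$, in which case Proposition~\ref{sections of dgn} (here $g\geq 4$ enters) forces $q\circ\sigma=s_j$ for a unique $j\in\{1,\dots,n\}$.

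First I would determine $\sigma$ on $\Gr^W_{-1}$. As a section of $\gamma^o_n$ it has the form $X\mapsto X+\phi(X)$ on $\Gr^W_{-1}\d_{g,n}=\Lambda^3_0H\oplus\bigoplus_{i=1}^nH_i$, where $\phi$ takes values in the kernel $H_{n+1}$ of $\Gr^W_{-1}\gamma^o_n$. Since $\Hom_{\Sp(H)}(\Lambda^3_0H,H)=0$ and $\Hom_{\Sp(H)}(H,H)=\Ql$, the map $\phi$ vanishes on $\Lambda^3_0H$ and is a scalar $c_i$ on each $H_i$. Recalling that on $\Gr^W_{-1}$ the map $q$ sends $H_{n+1}$ isomorphically onto the fibre factor $H_0$ of $\d_{\cC_{g,n}}$ (this is the weight $-1$ part of $\p'\to\p$) and is the identity on the other factors, comparison of $q\circ\sigma$ with the formula for $s_j$ in Proposition~\ref{sections of dgn} forces $c_i=\delta_{ij}$. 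Thus, on $\Gr^W_{-1}$, $\sigma$ is the identity on $\Lambda^3_0H$ and on $H_i$ for $i\neq j$, while $\sigma(u^{(j)})=u^{(j)}+u^{(n+1)}$ for $u\in H$.

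The crux is a weight $-2$ computation isolating the class $\Theta_{j,n+1}$. The exact sequence $0\to\p_{g,n}\to\g^\geom_{g,n}\to\g^\geom_g\to 0$ yields an inclusion $\Gr^W_\bullet\p_{g,n}\hookrightarrow\Gr^W_\bullet\u^\geom_{g,n}$, so the relations of Theorem~\ref{presentation for configuration space} descend to the puncture classes of $\d_{g,n}$ and $\d_{g,n+1}$: the class $\Theta_{ab}=\sum_k[a_k^{(a)},b_k^{(b)}]$ spans $\Ql(1)_{\{a,b\}}$, one has $\Theta_{ab}=\Theta_{ba}$, and $\Theta_a:=\sum_k[a_k^{(a)},b_k^{(a)}]$ satisfies $\Theta_a=-\tfrac1g\sum_{b\neq a}\Theta_{ab}$. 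I would evaluate $\sigma(\Theta_j)$ in two ways. Expanding on generators with the weight $-1$ formula for $\sigma$ gives, in $\d_{g,n+1}$,
$$\sigma(\Theta_j)=\sum_k[\sigma(a_k^{(j)}),\sigma(b_k^{(j)})]=\Theta_j+2\Theta_{j,n+1}+\Theta_{n+1},$$
while first using $\Theta_j=-\tfrac1g\sum_{m\neq j,\,m\leq n}\Theta_{jm}$ in $\d_{g,n}$ and then applying $\sigma$, together with $\sigma(\Theta_{jm})=\Theta_{jm}+\Theta_{m,n+1}$ for $m\neq j$ (computed the same way), gives
$$\sigma(\Theta_j)=-\tfrac1g\sum_{m\neq j,\,m\leq n}\bigl(\Theta_{jm}+\Theta_{m,n+1}\bigr).$$
Substituting $\Theta_j=-\tfrac1g\bigl(\sum_{m\neq j,\,m\leq n}\Theta_{jm}+\Theta_{j,n+1}\bigr)$ and $\Theta_{n+1}=-\tfrac1g\sum_{m\leq n}\Theta_{m,n+1}$ into the first expression and subtracting the second, every term cancels except $\bigl(2-\tfrac2g\bigr)\Theta_{j,n+1}$. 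As $\Theta_{j,n+1}$ is a nonzero generator of $\Ql(1)_{\{j,n+1\}}$ and $2-\tfrac2g\neq 0$ for $g\geq 4$, this is a contradiction, completing the proof.

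The step I expect to require the most care is not the final cancellation but the justification that the presentation of Theorem~\ref{presentation for configuration space} may be applied inside $\d_{g,n}$ and $\d_{g,n+1}$: one must check that the classes $\Theta_a,\Theta_{ab}$ survive the quotient by $W_{-3}+V_{[2+2]}(-1)$ defining $\d$ (they do, lying in the $\Sp(H)$-trivial summands $\Ql(1)$, which are disjoint from the nontrivial $V_{[2+2]}(-1)$), and that $q$ indeed matches $H_{n+1}$ with $H_0$ and carries each $\Theta_{j,n+1}$ consistently, so that the two evaluations of $\sigma(\Theta_j)$ live in the same space and may be compared term by term.
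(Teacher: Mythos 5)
Your proposal is correct and follows essentially the same route as the paper's proof: compose with the surjection $\d_{g,n+1}\to\d_{\cC_{g,n}}$ to pin down the section in weight $-1$ via Proposition~\ref{sections of dgn}, then derive a weight $-2$ contradiction from the relations $\Theta_a=-\tfrac{1}{g}\sum_{b\neq a}\Theta_{ab}$ of Theorem~\ref{presentation for configuration space}, arriving at the same nonzero obstruction $\tfrac{2g-2}{g}\Theta_{j,n+1}$. The only differences are cosmetic: you label the new puncture $n+1$ rather than $0$, subtract two evaluations of $\sigma(\Theta_j)$ instead of applying the projection $q_{01}$ onto $\Ql(1)_{01}$, and treat all $n\geq 1$ uniformly where the paper splits into the cases $n=1$ and $n>1$.
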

\begin{proof} \textcolor{black}{Assume that $g\geq 4$.} For $n=0$, there is an isomorphism $\d_{g,1}\cong\d_{\cC_g}$ and hence our claim follows from Proposition \ref{sections of dgn}.
Suppose that there is a $\GSp(H)$-equivariant Lie algebra section $s$ of $\gamma^o_n$. By composing with the surjection $\d_{g,n+1}\to\d_{\cC_{g,n}}$, we obtain a section of $\gamma_n$.  \textcolor{black}{Then, by Proposition \ref{sections of dgn},} the restriction of $s$ to weight $-1$ is given by
$$\Gr^W_{-1}s: (v; u_1,\ldots,u_n)\mapsto (v; u_i,u_1,\ldots,u_n)$$
for some $i\in\{1,\ldots,n\}$.  \textcolor{black}{Recall notations in Theorem \ref{presentation for configuration space}.} For simplicity, we may assume $i=1$. First, when $n=1$, we see from the description of $\d_{g,1}$ that $\Gr^W_{-2}\d_{g,1}$ does not contain $\Ql(1)$ and hence that $\Theta_1=0$ in $\Gr^W_{-2}\d_{g,1}$. \textcolor{black}{Therefore, we have $0=\Gr^W_{-2}s(\Theta_1)= \sum_{k=1}^g[\Gr^W_{-1}s(a^{(1)}_k), \Gr^W_{-1}s(b^{(1)}_k)]$, since $s$ is a Lie algebra homomorphism.  On the other hand,} using Theorem \ref{presentation for configuration space}, 
we have
\begin{align*}
\sum_{k=1}^g[\Gr^W_{-1}s(a^{(1)}_k), \Gr^W_{-1}s(b^{(1)}_k)]&=\sum_{k=1}^g[a^{(0)}_k+a^{(1)}_k, b^{(0)}_k+b^{(1)}_k]\\
                                                                                      &=\sum_{k=1}^g[a^{(0)}_k, b^{(0)}_k]+[a^{(0)}_k, b^{(1)}_k]+[a^{(1)}_k,b^{(0)}_k]+[a^{(1)}_k,b^{(1)}_k]\\
                                                                                      &=\Theta_0+ 2\Theta_{01} +\Theta_1\\
                                                                                      &=-\frac{1}{g}\Theta_{01}+2\Theta_{01}-\frac{1}{g}\Theta_{01}\\
                                                                                      &=\frac{2g-2}{g}\Theta_{01}. 
\end{align*}
This is nontrivial in $\Gr^W_{-2}\d_{g,2}$.
   \textcolor{black}{Thus, }we have a desired contradiction. Now assume $n>1$. Let $j\in\{2,\ldots,n\}$. First, we compute $\Gr^W_{-2}s(\Theta_{1j})$. We have
\begin{align*}
\Gr^W_{-2}s(\Theta_{1j}) &=\Gr^W_{-2}s\left(\sum_{k=1}^g[a^{(1)}_k,b_k^{(j)}]\right)\\
                                         &=\sum_{k=1}^g[\Gr^W_{-1}s(a^{(1)}_k),\Gr^W_{-1}s(b^{(j)}_k)]\\
                                         &=\sum_{k=1}^g[a^{(0)}_k+a^{(1)}_k, b^{(j)}_k]\\
                                         &=\sum_{k=1}^g[a^{(0)}_k,b^{(j)}_k]+[a^{(1)}_k,b^{(j)}_k]\\
                                         &=\Theta_{0j}+\Theta_{1j}. 
\end{align*}
For each $0\leq i<j\leq n$, fix a $\GSp(H)$-equivariant projection $q_{ij}:\Gr^W_{-2}\d_{g,n+1}\to \Ql(1)_{ij}$. Since by Theorem \ref{presentation for configuration space} we have $\Theta_1=-\frac{1}{g}\sum_{j=2}^n\Theta_{1j}$ in $\Gr^W_{-2}\d_{g,n}$, the above computation implies that $(q_{01}\circ\Gr^W_{-2}s)(\Theta_1)=0.$  On the other hand, in $\Gr^W_{-2}\d_{g,n+1}$ we have
$$(q_{01}\circ\Gr^W_{-2}s)(\Theta_1)=q_{01}(\Theta_0+2\Theta_{01}+\Theta_1)=\frac{2g-2}{g}\Theta_{01},$$
which is nontrivial. Thus we have a contradiction. 

\end{proof}

\section{Sections of  Universal Curves $\M_{g,n+1}\to\M_{g,n}$}
In this section, we will prove Theorem 1. Recall that  $\pi^o:\M_{g,n+1/k}\to\M_{g,n/k}$ is the complement in $\cC_{g,n/k}$ of the $n$ tautological sections of the universal curve $\pi:\cC_{g,n/k}\to\M_{g,n/k}$. When $\mathrm{char}(k)=0$, it follows from a result of Earle and Kra \cite{EaKr} that the sections of $\pi$ are exactly the $n$ tautological sections. This then implies that the $n$-punctured universal curve $\pi^o$ admits no sections. Theorem 1 is the analogue of this geometric result in terms of their fundamental groups. Although we are not able to determine the set of the conjugacy classes of the sections of $\pi_\ast: \pi_1(\cC_{g,n/k}, \bar x)\to \pi_1(\M_{g,n/k}, \bar y)$, the theorem gives us further evidence for \textcolor{black}{Grothendieck's section conjecture for the universal curve, that is, the bijection between  the set of the geometric sections  of $\pi$ and the set of the conjugacy classes of the group theoretical sections of $\pi_\ast$.} 

\begin{proof}[Proof of Theorem 1]  \textcolor{black}{Fixing an embedding $\overline{\Q}\hookrightarrow \bar k$ determines an isomorphism
$$ \pi_1(\M_{g,n/\overline{\Q}}, \bar y)\cong \pi_1(\M_{g,n/\bar k}, \bar y),$$ 
which in turn induces an isomorphism on the relative completions. Thus, we may assume that $k =\Q$. }
Let $s$ be a section of $\pi^o_\ast:\pi_1(\M_{g,n+1/\bar k}, \bar x)\to\pi_1(\M_{g,n/\bar k}, \bar y)$. By the universal property of relative completion (see \cite[cf.~\S 6.1, 6.2]{hain2}), $s$ induces a section $\tilde{s}_{\cG}$ of $\tilde{\pi}^o_\ast:\cG^\geom_{g,n+1}\to \cG^\geom_{g,n}$, which then induces a section $\tilde{s}_{\U}$ of $\U^\geom_{g,n+1}\to\U^\geom_{g,n}$ that makes the diagram
$$\xymatrix{
1\ar[r]&\U^\geom_{g,n+1}\ar[r]\ar[d]&\cG^\geom_{g,n+1}\ar[r]\ar[d]^{\tilde{\pi}^o_\ast}&\Sp(H)\ar[r]\ar@{=}[d]&1\\
1\ar[r]&\U^\geom_{g,n}\ar[r]\ar@/^1pc/[u]^{\tilde{s}_{\U}}&    \cG^\geom_{g,n}    \ar[r]\ar@/^1pc/[u]^{\tilde{s}_{\cG}}&\Sp(H)\ar[r]&1
}
$$
\textcolor{black}{commutative}. From this, we see that the section $\tilde{s}_{\U}$ induces a Lie algebra section $ds_\ast:\u^\geom_{g,n}\to\u^\geom_{g,n+1}$. Since the weight filtration on $\u^\geom_{g,n}$ agrees with its lower central series, it follows that $ds_\ast$ yields a graded Lie algebra section $\Gr^W_\bullet ds_\ast$ of $\Gr^W_\bullet\u^\geom_{g,n+1}\to \Gr^W_{\bullet}\u^\geom_{g,n}$, which then induces a graded Lie algebra section $\d(ds_\ast)$ of  $\gamma^o_n$.  
We will show that $\Gr^W_\bullet ds_\ast$ is an $\Sp(H)$-equivariant homomorphism. 
\textcolor{black}{Since $\Gr^W_\bullet \u^\geom_{g,n}$ is generated by $H_1(\u^\geom_{g,n})\cong H_1(\U^\geom_{g,n})$, it is enough to show that the induced map $\tilde{s}_{\U}^\ab$ on the abelianization  is $\Sp(H)$-equivariant. }
\textcolor{black}{Note that the $\Sp(H)$-action on $H_1(\U^\geom_{g,n})$ is induced by the restriction of the inner action of $\cG^\geom_{g,n}$ to $\U^\geom_{g,n}$. 
}
\textcolor{black}{Let $r$ be an element in $\Sp(H)$. Represent $r$ by an element $\tilde{r}$ in $\cG^\geom_{g,n}$. For $v$ in $H_1(\U^\geom_{g,n})$, let $\tilde{v}$ be a representative of $v$ in $\U^\geom_{g,n}$.  Then we have
\begin{align*}
\tilde{s}^\ab_{\U}(r\cdot v) & = (\tilde{s}_{\U}(\tilde{r}^{-1}\tilde{v}\tilde{r}))^\ab\\
                                         & = (\tilde{s}_{\cG}(\tilde{r}^{-1})\tilde{s}_{\U}(\tilde{v})\tilde{s}_{\cG}(\tilde{r}))^\ab\\
                                         & = (\tilde{s}_{\cG}(\tilde{r})^{-1}\tilde{s}_{\U}(\tilde{v})\tilde{s}_{\cG}(\tilde{r}))^\ab\\
                                         & = r\cdot\tilde{s}^\ab_{\U}(v),
                                         \end{align*}
where $\phantom{}^\ab$ denotes the image in the abelianization $H_1(\U^\geom_{g,n+1})$.                                         }
Thus \textcolor{black}{the map $\tilde{s}^\ab_{\U}$ is $\Sp(H)$-equivariant.} Hence $\Gr^W_\bullet ds_\ast$ is an $\Sp(H)$-equivariant graded Lie algebra section and so is $\d(ds_\ast)$, but  by Proposition \ref{no sections of dgn lie algebra projections}, there is no such section of $\gamma^o_n$.  \textcolor{black}{This implies that there is no section of 
$\tilde{\pi}^o_\ast$, and therefore no section of $\pi^o_\ast$.}  \\

\end{proof}
An immediate corollary of Theorem 1 is the following result:
\begin{corollary}
If $g\geq 4$ and $n\geq 0$, the exact sequence
$$1\to \Pi'^\top\to\G_{g,n+1}\to\G_{g,n}\to 1$$
does not split, where $\Pi'^\top$ denotes the topological fundamental group $\pi_1^\top(C^o_{\bar y}, \bar x)$. 
\end{corollary}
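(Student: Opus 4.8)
The plan is to deduce the Corollary from Theorem 1 by passing to profinite completions over $\C$. Since the statement is purely topological, I would first fix $k$ to be an algebraically closed field of characteristic zero, say $k=\C$, so that Theorem 1 applies with $\bar k=\C$ and the comparison between topological and étale fundamental groups is available.

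First I would recall the comparison inputs. For the Deligne--Mumford stack $\M_{g,m/\C}$ the étale fundamental group $\pi_1(\M_{g,m/\C})$ is canonically the profinite completion $\widehat{\G}_{g,m}$ of the mapping class group, the comparison being realized by the natural orbifold map $c_m:\G_{g,m}\to\pi_1(\M_{g,m/\C})$. Likewise, the fiber $C^o_{\bar y}$ is an $n$-punctured compact Riemann surface of genus $g$, so its étale fundamental group $\Pi'=\pi_1(C^o_{\bar y},\bar x)$ is the profinite completion $\widehat{\Pi'^\top}$ of $\Pi'^\top=\pi_1^\top(C^o_{\bar y},\bar x)$. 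These comparison maps are functorial for the morphism of stacks $\pi^o:\M_{g,n+1/\C}\to\M_{g,n/\C}$ and for the inclusion of the fiber, so they assemble into a commutative ladder from the topological exact sequence of the Corollary to the étale exact sequence of Theorem 1, with vertical arrows the profinite completion maps $c_{n+1}$, $c_n$, and $\Pi'^\top\to\Pi'$; in particular $\pi^o_\ast\circ c_{n+1}=c_n\circ p$, where $p:\G_{g,n+1}\to\G_{g,n}$ denotes the topological projection.

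Next I would argue by contradiction. Suppose the topological sequence splits via a homomorphism $\sigma:\G_{g,n}\to\G_{g,n+1}$ with $p\circ\sigma=\id$. Composing with $c_{n+1}$ yields a homomorphism $c_{n+1}\circ\sigma:\G_{g,n}\to\pi_1(\M_{g,n+1/\C})$ into a profinite group, which by the universal property of profinite completion factors uniquely through a continuous homomorphism $\hat\sigma:\pi_1(\M_{g,n/\C})\to\pi_1(\M_{g,n+1/\C})$ satisfying $\hat\sigma\circ c_n=c_{n+1}\circ\sigma$. I would then verify that $\hat\sigma$ is a section of $\pi^o_\ast$: one computes $\pi^o_\ast\circ\hat\sigma\circ c_n=\pi^o_\ast\circ c_{n+1}\circ\sigma=c_n\circ p\circ\sigma=c_n$, and since the image of $c_n$ is dense and both $\pi^o_\ast\circ\hat\sigma$ and $\id$ are continuous, they agree on all of $\pi_1(\M_{g,n/\C})$. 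This contradicts Theorem 1, which asserts that the étale sequence does not split for $g\geq 4$ and $n\geq 0$.

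The argument is essentially formal once the comparison inputs are in place, so the point requiring care---rather than a genuine obstacle---is the functoriality of the topological-to-étale comparison: one must know that $\pi_1(\M_{g,m/\C})\cong\widehat{\G}_{g,m}$ and that the squares of the ladder commute, which follows from the Riemann existence theorem for Deligne--Mumford stacks together with the compatibility of the homotopy exact sequence of $\pi^o$ with profinite completion. All of the substantive content lives in Theorem 1; the descent of a splitting from the discrete level to the profinite level is immediate from the universal property of profinite completion.
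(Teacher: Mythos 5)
Your proposal is correct and follows the same route as the paper: the paper's proof is exactly the observation that a section of the orbifold (mapping class group) sequence induces, upon profinite completion, a section of the \'etale sequence $\pi_1(\M_{g,n+1/\C},\bar x)\to\pi_1(\M_{g,n/\C},\bar y)$, contradicting Theorem 1. Your write-up simply makes explicit the comparison isomorphisms and the density argument that the paper leaves implicit.
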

\begin{proof}A section of the projection $\pi_1^\orb(\M_{g,n+1/\C}, \bar x)\to \pi_1^\orb(\M_{g,n/\C}, \bar y)$ induces a section of $\pi_1(\M_{g,n+1/\C}, \bar x)\to \pi_1(\M_{g,n/\C}, \bar y)$ upon profinite completion. By Theorem 1, there is no such section, and hence our result follows. 
\end{proof}
\begin{remark}
For the case $n=0$, this is the Birman exact sequence 
$$1\to \Pi^\top\to\G_{g,1}\to\G_g\to 1,$$ which was known to be nonsplit (see  \cite[Cor.~5.11]{FaMa}). 
\end{remark}


\end{document}